\documentclass{elsarticle}

\usepackage[T1]{fontenc}

%For SIAGA
%\documentclass[review,onefignum,onetabnum]{siamart190516}

%\input{ex_shared}

% Optional PDF information
%\ifpdf
%\hypersetup{
%  pdftitle={Strictly positive polynomials in the boundary of the SOS cone},
%  pdfauthor={S. Laplagne and M. Valdettaro}
%}
%\fi

%\usepackage{ifxetex}
\usepackage{ifpdf}
\usepackage{float}

\usepackage{breqn}
\makeatletter
\AtBeginDocument{% <-- add this to solve the eternal "labels changed"
    \catcode`_=12
    \begingroup\lccode`~=`_
    \lowercase{\endgroup\let~}\sb
    \mathcode`_="8000
    \immediate\write\@auxout{\catcode`_=12 }% <-- add this
    \immediate\write\@auxout{\catcode`^=12 }% <-- add this
}
\makeatother

\usepackage{bm}

\usepackage{mathtools}
\usepackage{xargs}
%\usepackage[pdftex,dvipsnames]{xcolor}
%\usepackage[colorinlistoftodos,prependcaption,textsize=tiny]{todonotes}
%\newcommandx{\unsure}[2][1=]{\todo[linecolor=red,backgroundcolor=red!25,bordercolor=red,#1]{#2}}
%\newcommandx{\change}[2][1=]{\todo[linecolor=blue,backgroundcolor=blue!25,bordercolor=blue,#1]{#2}}
%\newcommandx{\info}[2][1=]{\todo[linecolor=OliveGreen,backgroundcolor=OliveGreen!25,bordercolor=OliveGreen,#1]{#2}}
%\newcommandx{\improvement}[2][1=]{\todo[linecolor=Plum,backgroundcolor=Plum!25,bordercolor=Plum,#1]{#2}}
%\newcommandx{\thiswillnotshow}[2][1=]{\todo[disable,#1]{#2}}

%\def\mv#1{\noindent{\color{orange!92!black}#1}}

%%%%%%%%%%%%% Packages %%%%%%%%%%%%%%%%%
%\usepackage{amssymb, amsthm,amsmath}
\usepackage{amssymb, amsmath, amsthm}

\ifpdf
  \usepackage{graphicx}
  \usepackage{hyperref}
\else
  \ifxetex
    \usepackage{graphicx}
    \usepackage{hyperref}
  \else
    \usepackage[dvipdfm]{graphicx}
    \usepackage[dvipdfm]{hyperref}
  \fi
\fi
\graphicspath{{./etc/}}

\usepackage{enumitem}
\usepackage{soul, color}

\usepackage{listings}
\lstset{
  frame=single,
  breaklines=true,
  basicstyle=\fontsize{7}{13}\selectfont\ttfamily
}

%%%%%%%%%%%% algo and keywords %%%%%%%
%\usepackage[ruled]{algorithm2e}
%\SetKw{KwGoto}{goto}
%\SetKwRepeat{OnError}{try}{on error}
%%%%%%%%%%%% hyperref %%%%%%%
\hypersetup{
bookmarksnumbered=true,
colorlinks=true,
pdfstartview={FitH},
linkcolor=blue
}

%%%%%%%%%%%%%%%%%%%%%%%%%%%%%%%%%%%%%%%
%\newcommand\sep{\,:\,}

\DeclareMathOperator\V{V}

\newcommand\Kx{K[\bm{x}]}

\newcommand\Z{\mathbb{Z}}

\newcommand\R{\mathbb{R}}
\newcommand\Proj{\mathbb{P}}

\newcommand{\N}{\mathbb{N}}

\renewcommand\st{\mathrel{\ooalign{$\,\backepsilon$\cr\lower .7pt\hbox{\kern 1pt$-\,$}}}}

\DeclareMathOperator{\Hankel}{\mathcal{H}}

\DeclareMathOperator{\Lt}{Lt}
\DeclareMathOperator{\lt}{lt}

\DeclareMathOperator{\EGH}{EGH}
\DeclareMathOperator{\HF}{HF}
%\DeclareMathOperator{\ker}{ker}

%%%%%%%%%%%%%%% TITLE %%%%%%%%
%\title{Template for TeX files}
\title{}
\date{}
\thispagestyle{empty}

%%%%%%%%%%%%%%%%%%%%%%%%%%%
\theoremstyle{plain}
\newtheorem{theorem}{Theorem}[section]
\newtheorem{lemma}[theorem]{Lemma}
\newtheorem{corollary}[theorem]{Corollary}
\newtheorem{proposition}[theorem]{Proposition}

\theoremstyle{definition}
\newtheorem{definition}[theorem]{Definition}
\newtheorem{remark}[theorem]{Remark}
\newtheorem{example}[theorem]{Example}
\newtheorem{notation}[theorem]{Notation}
\newtheorem{conjecture}[theorem]{Conjecture}
\newtheorem{question}{Question}

\newproof{proofels}{Proof}

%\newsiamremark{notation}{Notation}
%\newsiamthm{conjecture}{Conjecture}
%\newsiamremark{example}{Example}
%\newsiamremark{question}{Question}

%%% free theorem styles
%\newcommand{\thistheoremname}{}
%\newtheorem{genericthm}[theorem]{\thistheoremname}
%\newenvironment{namedthm}[1]
%  {\renewcommand{\thistheoremname}{#1}%
%   \begin{genericthm}}
%  {\end{genericthm}}
%\newtheorem{none}[theorem]{}

\hypersetup{pdfauthor=author}

\begin{document}

\begin{frontmatter}

\title{Strictly positive polynomials in the boundary of the SOS cone}

\author[1]{Santiago Laplagne\corref{cor1}}
\ead{slaplagn@dm.uba.ar}
\author[2]{Marcelo Valdettaro}
\ead{mvaldett@dm.uba.ar}
%\thanks{}

\cortext[cor1]{Corresponding author}

\affiliation[1]{
organization={Instituto de {Cálculo}, FCEyN, Universidad de Buenos Aires},
addressline={Ciudad Universitaria, Edificio 0 $+$ Infinito},
city={Buenos Aires},
postcode={C1428EGA},
country={Argentina}
}

\affiliation[2]{
organization={Departamento de Matem\'atica, FCEyN, Universidad de Buenos Aires},
addressline={Ciudad Universitaria, Pabell\'on 1},
city={Buenos Aires},
postcode={C1428EGA},
country={Argentina}}

\begin{abstract}
We study the boundary of the cone of real polynomials that can be decomposed as a sum of squares (SOS) of real polynomials. This cone is included in the cone of nonnegative polynomials and both cones share a part of their boundary, which corresponds to polynomials that vanish at at least one point. We focus on the part of the boundary which is not shared, corresponding to strictly positive polynomials.

For the cases of polynomials of degree 6 in 3 variables and degree 4 in 4 variables, this boundary has been completely characterized by G. Blekherman. For the cases of more variables or higher degree, results by G. Blekherman, R. Sinn and M. Velasco and other authors based on general conjectures give bounds for the maximum number of polynomials that can appear in a SOS decomposition and the maximum rank of the matrices in the Gram spectrahedron. Combining theoretical results and computational techniques, we compute examples that allow us to prove the optimality of the bounds for all degrees and number of variables. Additionally, we give examples for the following problems: examples in the boundary of the cone that are the sum of less than $n$ squares and have common complex roots, and examples of polynomials in the boundary with length larger than the expected from the dimension.
% REQUIRED
%\begin{keywords}
%  positive polynomials, sum of squares, Gram spectrahedron, Hilbert function
%\end{keywords}

% REQUIRED
%\begin{AMS}
%  14P10, 14Q20, 68W30
%\end{AMS}

%We study strictly positive polynomials in the boundary of the SOS cone and extreme rays in the dual cone.
%The cases of polynomials of degree 6 in 3 variables and degree 4 in 4 variables have been completely characterized by G. Blekherman \cite{blekherman}. We focus on the cases of more variables or higher degree, for which very little is known. We show that a conjecture by Eisenbud, Green and Harris gives bounds on the ranks of the associated quadratic forms. In particular, for the case of homogeneous quadric polynomials in 5 variables, for which the conjecture has recently been proved, we obtain bounds on the maximum number of polynomials that can appear in a sum of squares decomposition, which improves the known general bounds. We provide several new examples in different cases showing that the bounds predicted by the conjectured are attainable.
\end{abstract}

\end{frontmatter}

\section{Introduction}
\label{introduction}

The decomposition of a real multivariate polynomial as a sum of squares of real polynomials is a central problem in real algebraic geometry, with many theoretical and practical applications.
Any such decomposition gives a certificate of non-negativity and can therefore be used to solve equalities or inequalities.
%(see for example \cite{shankar}).
Of particular importance, the problem of minimizing a polynomial function $f$ can be converted into the problem of finding the maximum $a \in \R$ such that $f - a$ is nonnegative.
Although not every nonnegative polynomial can be decomposed as a sum of squares of real polynomials, fundamental developments by J.B. Lasserre \cite{lasserre} and P. Parrilo \cite{parrilo2003} in the beginning of this century showed that the general minimization problem can be posed as a series of sum of squares decomposition problems. The main interest in this approach is that while the problem of deciding nonnegativity for a general polynomial is computationally hard, the problem of computing a sum of squares decomposition can be solved efficiently by semidefinite programming (a class of convex optimization problems), and many highly optimized software programs are available for this task. This strategy has been used extensively in different areas of mathematics, such as control theory, stability analysis of nonlinear systems, statistics, finance, engineering, robotics and many more.

In 1888, D. Hilbert proved in a famous paper \cite{Hilbert} that every nonnegative polynomial in $n$ variables and even degree $2d$ can be represented as a sum of squares of real polynomials if and only if either (a) $n = 1$ or (b) $d = 2$ or (c) $n = 2$ and $d = 4$. In this work we are interested in better understanding the difference between the set of nonnegative polynomials and the set of polynomials that can be decomposed as sum of squares, which we call SOS polynomials for short. We restrict to the case of homogeneous polynomials, since any nonhomogeneous polynomial can be homogenized preserving nonnegativity and SOS decompositions. For a fixed number of variables $n$ and degree $2d$, the sets of nonnegative polynomials and SOS polynomials define both full dimensional convex cones in the space of homogeneous polynomials.  By the result of Hilbert mentioned above, the SOS cone is in general strictly contained in the cone of nonnegative polynomials. In this work, we study the boundary of the SOS cone that is not part of the boundary of the nonnegative cone. That is, the set of strictly positive polynomials in the boundary of the SOS cone.

Let $H_{n,d}$ be the vector space of homogeneous polynomials in $n$ variables of degree $d$ with coefficients in $\R$. Let $\Sigma_{n,2d} \subset H_{n,2d}$ be the cone of polynomials that can be written as a sum of squares of polynomials in $H_{n,d}$ and let $\partial \Sigma_{n,2d}$ be its boundary. Let $\partial \Sigma^+_{n,2d}$ be the set of strictly positive polynomials in $\partial \Sigma_{n,2d}$.

The condition for $f \in H_{n,2d}$ to be a sum of squares is equivalent to the existence of a real positive semidefinite matrix $Q$ such that $f = v^{T} Q v$, where $v$ is the vector of monomials of degree $d$ in $n$ variables. We will write $Q \succeq 0$ to denote real positive semidefinite matrices. The Gram spectrahedron of $f$ is defined as the set of all $Q \succeq 0$ satisfying the above equality.

In \cite{blekherman}, G. Blekherman studied the sum of squares decompositions of strictly polynomials in $\partial \Sigma_{n,2d}$ for the cases $(n, 2d) = (3,6)$ and $(4,4)$. He obtained the following results.

\begin{theorem}[{\cite[Corollary 1.3]{blekherman}}]\label{blek36} Suppose $f\in\partial\Sigma_{3,6}, f>0$. Then $f$ can be written as a sum of 3 squares and cannot be written as a sum of 2 squares.
\end{theorem}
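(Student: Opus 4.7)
The plan is to split the theorem into its two assertions and treat them by quite different methods.

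For the claim that $f$ is \emph{not} a sum of $2$ squares, I would argue by B\'ezout in $\mathbb P^2_\C$. Suppose $f = p^2 + q^2$ with $p, q \in H_{3,3}$. If $\gcd(p,q) = h$ is nontrivial, write $f = h^2(\tilde p^2 + \tilde q^2)$ with coprime $\tilde p, \tilde q$ and $\deg h \in \{1,2,3\}$. In each of these three cases $f$ must have a real projective zero: a linear or cubic ternary form always vanishes somewhere on $\mathbb P^2_\R$ (the cubic case because a homogeneous odd-degree form is antipodally odd on $S^2$); and if $\deg h = 2$ then $\tilde p, \tilde q$ are linear and $\tilde p^2 + \tilde q^2$ vanishes on a real line or point according to whether they are dependent or independent. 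Each contradicts $f > 0$, so we may assume $\gcd(p,q) = 1$. B\'ezout then gives exactly $9$ common zeros of $p, q$ in $\mathbb P^2_\C$, with multiplicity. The zero scheme is closed under complex conjugation, and since $9$ is odd at least one point must be fixed, that is, real, and there $f$ vanishes -- contradiction.

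For the claim that $f$ \emph{is} a sum of $3$ squares, the hypothesis $f \in \partial\Sigma_{3,6}$ becomes essential. Let $\mathcal G_f$ denote the Gram spectrahedron of $f$; since $f$ lies in the boundary of $\Sigma_{3,6}$, every $Q \in \mathcal G_f$ is PSD but singular. Pick $Q^\star \in \mathcal G_f$ of minimum rank $r$, so $f = \sum_{i=1}^{r} q_i^2$ with the $q_i$ spanning a subspace $U \subset H_{3,3}$ of dimension $r$; write $K = U^{\perp} \subset H_{3,3}$, of dimension $10 - r$. The goal is $r \leq 3$. Strict positivity $f > 0$ implies that the cubics in $U$ share no common real zero on $\mathbb P^2_\R$. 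Minimality of rank at $Q^\star$, together with the fact that $Q^\star$ lies in the boundary PSD stratum of the affine slice $\{Q : v^{T}Qv = f\}$, should translate via a tangent-cone / second-order analysis on the determinantal stratification into an \emph{apolarity} constraint linking $K$ and $U$. Combining this apolarity with Cayley--Bacharach and Hilbert-function estimates on the zero scheme of $K$ in $\mathbb P^2_\C$ (which is zero-dimensional of length at most $9$ by B\'ezout applied to any three cubics in $K$ once $\dim K \geq 3$), one extracts a bound of the shape $\binom{r+1}{2} \leq 9$, ruling out $r \geq 4$.

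The main obstacle is precisely the passage from minimality of rank in the Gram spectrahedron to the apolarity statement on $K$: this is the heart of Blekherman's theorem, and dimension counting alone does not control either the multiplication map $K \otimes U \to H_{3,6}$ or the base scheme of $K$. One must show that the boundary condition, strict positivity, and rank-minimality interact in just the right way that the specific numerology of $(n,2d) = (3,6)$ -- where $\dim H_{3,3} = 10$ and three cubics B\'ezout-intersect in $9$ points -- forces the inequality above. Supplying this apolarity input is what makes the theorem nontrivial and is also why the analogous statement for general $(n,2d)$ becomes delicate, motivating the EGH-based bounds discussed later in the paper.
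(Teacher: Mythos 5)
Your argument that $f$ \emph{cannot} be a sum of $2$ squares is correct, elementary, and essentially self-contained. The reduction by $\gcd$ is handled carefully in all three cases, and for coprime $p,q$ the parity argument is sound: the common zero scheme in $\mathbb{P}^2_\C$ has length $9$, is conjugation-invariant, non-real points pair up with equal multiplicities, so the real part has odd (hence positive) multiplicity, forcing a real common zero of $p$ and $q$ where $f$ vanishes. Notice that this only uses $f>0$; the boundary hypothesis $f\in\partial\Sigma_{3,6}$ plays no role in this half of the theorem, so you actually prove a slightly stronger statement.

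The ``sum of $3$ squares'' half has a genuine gap, and you say so yourself: the passage from rank-minimality of a Gram matrix to the apolarity constraint is exactly the step you do not supply, and the numerology $\binom{r+1}{2}\le 9$ you guess at is not the one that drives the proof. More to the point, the framing is off. Blekherman's argument, which is the one this paper recalls (Lemmas~\ref{lemma:extreme ray} and~\ref{lemma:pq}, Proposition~\ref{prop:3644}) and then generalizes (Proposition~\ref{prop:nd}, Theorem~\ref{Nsquares}), does not analyze a minimum-rank $Q^\star$ via the determinantal stratification of the spectrahedron. It passes to the dual cone: since $f\in\partial\Sigma_{3,6}$ there is an extreme ray $l\in\Sigma_{3,6}^*$ with $l(f)=0$ and $l$ not a point evaluation; if $f=\sum p_i^2$ then $0=l(f)=\sum Q_l(p_i)$ with $Q_l\succeq 0$ forces every $p_i\in W_l:=\ker Q_l$; Lemma~\ref{lemma:extreme ray} says the cubics in $W_l$ have no common projective zero, so $W_l$ contains a sequence of parameters and $\dim W_l\ge 3$; and Cayley--Bacharach (Theorem~\ref{thm:gorenstein}) gives $\dim W_l\le 3$, because if one more cubic is adjoined to a sequence of parameters the Hilbert function vanishes in degree $n(d-1)=3\cdot 2=6=2d$, which by Lemma~\ref{lemma:pq} would force $l=0$. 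The decisive coincidence is $n(d-1)=2d$ (special to $(3,6)$ and $(4,4)$), not a count of squares. With $\dim W_l=3$, every Gram matrix of $f$ has rank at most $3$, and the existence of a rank-$3$ decomposition follows. If you want to complete your write-up along the lines you started, you should replace the minimum-rank/tangent-cone heuristic with this extreme-ray argument.
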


\begin{theorem}[{\cite[Corollary 1.4]{blekherman}}]\label{blek44} Suppose $f\in\partial\Sigma_{4,4}, f>0$. Then $f$ can be written as a sum of 4 squares and cannot be written as a sum of 3 squares.
\end{theorem}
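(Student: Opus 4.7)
The plan is to use supporting-hyperplane duality for the SOS cone to translate the theorem into bounds on the ranks of matrices in the Gram spectrahedron of $f$, then apply Hilbert-function and Cayley--Bacharach arguments.

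Since $f\in\partial\Sigma_{4,4}$, there is a nonzero linear functional $\ell\in H_{4,4}^*$ with $\ell(f)=0$ and $\ell(g^2)\ge 0$ for all $g\in H_{4,2}$. The symmetric form $B_\ell(g,h):=\ell(gh)$ is PSD on the $10$-dimensional space $H_{4,2}$; for any decomposition $f=\sum g_i^2$, the identity $0=\ell(f)=\sum B_\ell(g_i,g_i)$ forces each $g_i\in V:=\ker B_\ell$. Since $f>0$, the functional $\ell$ cannot be a nonnegative combination of real point evaluations (which would detect a real zero of $f$), so $V$ is base-point-free over $\R$ and the discussion reduces to studying $V$ and the ideal $(V)\subset \R[x_1,\dots,x_4]$.

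For the sum-of-$4$-squares half, the goal is to find a supporting $\ell$ with $\dim V=4$: then the face of the Gram spectrahedron exposed by $\ell$ consists of PSD matrices whose column span lies in $V$, and a matrix in its relative interior realizes $f$ as a sum of $\dim V=4$ squares. The existence of such $\ell$ rests on bounding $\operatorname{rank} B_\ell\ge 6$ (equivalently $\dim V\le 4$): if $\dim V\ge 5$, apolarity lets one perturb $\ell$ inside the dual SOS cone while keeping $\ell(f)=0$, contradicting that $\ell$ is extremal at $f$. This is controlled by a Hilbert-function count on $(V)$ in degrees $\le 4$.

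For the not-sum-of-$3$-squares half, assume $f=g_1^2+g_2^2+g_3^2$ with $g_1,g_2,g_3$ linearly independent. The complex common-zero scheme $Z=V_\C(g_1,g_2,g_3)\subset \mathbb{P}^3_\C$ is generically a complete intersection of three quadrics, hence $8$ points counted with multiplicity, none real since $f>0$, with Koszul Hilbert function $H_{I_Z}=(0,0,3,12,27,\dots)$, and $f\in I_Z^2$. A Cayley--Bacharach relation on $Z$ forces any linear functional vanishing on $I_Z^2$ to vanish on a strictly larger space, which produces a perturbation $f+\varepsilon h$ with $h\in I_Z^2$ staying inside $\Sigma_{4,4}$ for small $\varepsilon$ of either sign, contradicting $f\in\partial\Sigma_{4,4}$.

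The main obstacle I expect is the degenerate configurations where $Z$ is not a reduced complete intersection --- e.g.\ the $g_i$ share a common linear factor, a conic component, or have embedded points --- in which the Koszul Hilbert function above fails and the Cayley--Bacharach step must be adapted to the non-reduced scheme structure. Handling these exceptional cases is where the specific arithmetic of $(n,2d)=(4,4)$ (the equality $\dim H_{4,2}=10$ together with the exact numerics of quartic forms on $\mathbb{P}^3$) is essential, consistent with the fact that the paper must invoke the Eisenbud--Green--Harris conjecture to push the argument beyond this case.
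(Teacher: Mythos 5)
The paper does not give its own proof of this statement: it is imported verbatim as \cite[Corollary~1.4]{blekherman}. So the comparison must be with Blekherman's argument, whose shape you can recover from the tools the paper does quote (Lemma~\ref{lemma:extreme ray}, Lemma~\ref{lemma:pq}, Proposition~\ref{prop:3644}). Your general philosophy --- pick an extreme supporting functional $\ell$, study $V=\ker Q_\ell$, bound its dimension by a Hilbert-function/Cayley--Bacharach count, and deduce rank bounds in the Gram spectrahedron --- is indeed the same. But both halves of your sketch have genuine gaps.

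For the ``at most $4$ squares'' half, the crucial inequality $\dim V\le 4$ is not obtained by ``perturbing $\ell$ inside the dual cone'' (which would require an argument you do not supply); the actual mechanism exploits the numerical coincidence $n(d-1)=2d$ when $(n,2d)=(4,4)$. Since $\ell$ is an extreme ray and not a point evaluation, $V$ has no common complex zeros, hence contains a parameter sequence of four quadrics; the associated complete intersection is Gorenstein with socle degree $4=2d$, so $\HF_{2d}$ of the parameter ideal is $1$, and adding any fifth independent quadric kills $\HF_{2d}$. That would force $\langle V\rangle_{2d}=H_{4,4}$ and then $\ell\equiv 0$ by Lemma~\ref{lemma:pq}, a contradiction. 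Your sentence is a placeholder for exactly this counting; as written it is not a proof. Combined with $\dim V\ge n$ (needed for base-point-freeness) you get $\dim V=4$, and then any Gram matrix is supported on $V$ and hence has rank at most $4$; you do not additionally need a relative-interior argument to realize a rank-$4$ matrix for the ``sum of $4$ squares'' conclusion, since ``sum of $4$ squares'' allows zero summands.

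The ``cannot be a sum of $3$ squares'' half is where the gap is serious. You argue that a rank-$3$ decomposition yields a scheme $Z$ of $8$ points, and that a Cayley--Bacharach relation on $Z$ lets you perturb $f$ by any $h\in (I_Z^2)_4$ in either direction while remaining in $\Sigma_{4,4}$, ``contradicting $f\in\partial\Sigma_{4,4}$.'' This last inference is false: $(I_Z^2)_4$ is only a $6$-dimensional subspace of the $35$-dimensional $H_{4,4}$, so being in the relative interior of a $6$-dimensional exposed face is perfectly compatible with lying on the topological boundary of the full cone. (Compare Example~\ref{ex1:64} in the paper, where a strictly positive boundary form is a sum of \emph{fewer} than $n$ squares; the Gram spectrahedron there is a single low-rank point, i.e.\ a tiny face, yet the form is genuinely on $\partial\Sigma$.) To refute boundary membership you would need perturbations spanning a full neighborhood of $f$, which $(I_Z^2)_4$ does not provide. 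Moreover, the Cayley--Bacharach bound for a $(2,2,2)$ complete intersection in $\mathbb{P}^3$ concerns forms of degree at most $2+2+2-4=2$; it does not directly control degree-$4$ functionals as you invoke it. The correct route goes through the extreme supporting $\ell$ again: $\dim W_\ell=4$ exactly, $W_\ell$ is a complete intersection, and then one shows the unique Gram matrix (the product map $\operatorname{Sym}^2 W_\ell\to H_{4,4}$ is injective by the Koszul resolution) has full rank $4$. This is the step you need to supply; the common-zero-scheme perturbation does not do it.
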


Moreover, in \cite{capco}, J. Capco and C. Scheiderer prove that the decompositions in the above theorems are essentially unique (that is, unique up to orthogonal transformations, or equivalently, there exists a unique $Q \succeq 0$ such that $f = v^{T}Qv$). For polynomials in more variables or higher degree, G. Blekherman, R.Sinn, and M. Velasco \cite{BSV} provide upper bounds for the number of polynomials in a SOS decomposition of a polynomial $f \in \partial\Sigma_{n,2d}^+$ depending on a conjecture know as the Ottaviani-Paoletti conjecture.

In this work we construct examples showing that the bounds obtained in \cite{BSV} are sharp, and present other examples for related problems. We also show how to obtain the same bounds assuming a conjecture by D. Eisenbud, M. Green, and J. Harris.

%However very few concrete examples of strictly positive polynomials in $\partial\Sigma_{n,2d}$ are known and hence in many cases it is not possible to determine the optimality of the bounds.
%By a simple dimension count, one can prove that not every strictly positive polynomial in $\partial \Sigma_{n,2d}$ can be decomposed as a sum of $n$ squares, but very little more is known.
%In this paper we study some properties and examples of strictly positive polynomials in $\partial\Sigma^+_{n,2d}$ in the general case.

%In \cite{blekherman}, Blekherman proves that any strictly positive polynomial in $\partial \Sigma_{n,2d}$ for the cases $(n, 2d) = (3,6)$ and $(4,4)$ is the sum of the squares of exactly 3 and 4 linearly independent polynomials respectively, and it cannot be written as a sum of less or more polynomials. Moreover, in \cite{capco}, J. Capco and C. Scheiderer prove that such decomposition is essentially unique (that is, unique up to orthogonal transformations, or equivalently, there exists a unique $Q \succeq 0$ such that $f = v^{T}Qv$). For polynomials in more variables or higher degree, by a simple dimension count one can prove that not every strictly positive polynomial in $\partial \Sigma_{n,2d}$ can be decomposed as a sum of $n$ squares, but very little more was known.
%In this paper we study some properties and examples of strictly positive polynomials in  $\partial \Sigma_{n,2d}$ in the general case.

\subsection{Questions}

The following questions are motivated by the results in \cite{blekherman} and \cite{BSV}, and we will provide total or partial answers for them.

%\subsubsection{For a strictly positive polynomial $f \in \partial\Sigma_{n,d}$, what is the maximum number $p$ of linearly independent polynomials that can appear in a sum of squares decomposition of $f$?}\label{quest1}
%\begin{question}\label{quest1}\emph{For a strictly positive polynomial $f \in \partial\Sigma_{n,d}$, what is the maximum number $p$ of linearly independent polynomials that can appear in a sum of squares decomposition of $f$?}
%For $(n, 2d) = (3,6)$ and $(4,4)$, the answer is $p=3$ and $p=4$ respectively, by Theorems \ref{blek36} and \ref{blek44}. In Theorem \ref{Nsquares} we obtain a bound for the general case assuming a conjecture due to D. Eisenbud, M. Green, and J. Harris (see Conjecture \ref{conj:EGH}), usually referred to as the EGH conjecture. The conjecture has been proved for the case $(n,2d) = (5,4)$. In Section \ref{section:54} we study this case, where we obtain $p=9$ (see Corollary \ref{coro:d2}).
%%In Example \ref{ex:54} an explicit strictly positive polynomial $f\in\partial\Sigma_{5,4}$ that can be decomposed as a sum of 9 squares, showing that the bound in this case is optimal. We also give examples for cases where the bound $p$ obtained assuming the EGH conjecture is reached. In Examples \ref{ex4:46} and \ref{ex2:64} \ref{ex2:644}, we give polynomials $f\in\partial\Sigma_{n,2d}$ for the cases $(n,2d)=(4,6)$ and $(n, 2d) = (6,4)$, for which we reach the bounds $p=13$ and $p=15$ respectively.
%\end{question}

\begin{question}
\emph{Are the upper bounds given in \cite{BSV} for the number of polynomials in a SOS decomposition of a strictly positive polynomial in the boundary of $\Sigma_{n,2d}$ optimal?} The bounds are known to be optimal for $(n, d) = (3,6)$ and $(4,4)$. We will first show that if the bounds are optimal for a particular $(n_0, d)$, then they are optimal for the same $d$ and any $n \ge n_0$. This, together with the results for $(n, 2d) = (3,6)$ and $(4,4)$ establishes the optimality of the bound for $d=2$ and $d=3$. We will also prove that the bounds are optimal for $d \ge 4$ and $n \ge 3$, providing a general method to construct examples satisfying the bound.
\end{question}

\begin{question}
\emph{What is the minimum number $q$ such that any strictly positive polynomial $f \in \partial\Sigma_{n,d}$ can be decomposed as a sum of squares of $q$ linearly independent polynomials?}
For $(n, 2d) = (3,6)$ and $(4,4)$, by Theorems \ref{blek36} and \ref{blek44}, we know that the polynomial $f$ can always be written as a sum of $n$ squares of linearly independent polynomials. In the general case, by dimension arguments, one can easily get bounds for the minimum number of polynomials in a decomposition. In particular, one can show that $n$ polynomials are in general not enough. One natural question is then if the bounds given by the dimension count can be achieved. We will provide examples where the minimal number of polynomials in any SOS decomposition is larger than the predicted number, giving a negative answer to that question. Example \ref{ex:46sumOf6} implies that $q\ge 6$ for $(n,2d) = (4,6)$ and Example \ref{ex:54sumOf7} proves that $q \ge 7$ for $(n,2d)=(5,4)$. In both cases, these bounds are larger than the bounds given by the dimension count.
\end{question}

\begin{question}
\emph{Can a strictly positive polynomial $f \in \partial\Sigma_{n,d}$ be written as a sum of less than $n$ squares of linearly independent polynomials?} We know that the answer is  negative for the cases $(n,2d)=(3,6)$ and $(4,4)$, again by Theorems \ref{blek36} and \ref{blek44}. Surprisingly, this condition does not hold in the general case. We provide an explicit example of a strictly positive polynomial in $\partial\Sigma_{6,4}$ that can be decomposed as a sum of 5 squares of linearly independent polynomials (see Example \ref{ex:64sumOf5}) and we extend this example to any number of variables $n \ge 6$ (see Example \ref{ex:6nsumOf5}).

Note that if there are less than $n$ homogeneous polynomials in the decomposition, those polynomials will necessarily have common complex roots. So this is another property that cannot be generalized from the cases $(n,2d)=(3,6)$ and $(4,4)$ to the general case. This observation is also discussed in Example \ref{ex:64sumOf5}.
\end{question}

\begin{question}
\emph{How can we verify numerically or symbolically that a polynomial $f$ is in $\partial\Sigma_{n, 2d}$ and that $f$ is strictly positive?}
It is possible to check numerically the condition for $f$ to be in the boundary of the SOS cone using a semidefinite program (SDP) solver like SEDUMI \cite{SEDUMI} and checking whether there is a positive semidefinite matrix in the Gram spectrahedron of $f$ but not a positive definite matrix. Checking this condition symbolically is much more difficult and we do not know any general criterium. However, we prove this condition for Example \ref{ex:reznick} (see Proposition \ref{prop:unique46}), Example \ref{ex:46sumOf5} and Example \ref{ex:54sumOf7}, providing tools that are useful in many cases.
\end{question}

\subsection{Preliminaries}

We start by recalling known facts about the Hilbert function and sequences of parameters.
Let $\Kx = K[x_1,\dots,x_n]$, where $K$ is any field. We note $\Kx_d$ the set of homogeneous polynomials in $\Kx$ of degree $d$.

\begin{definition} We say that $p_1, \dots, p_n \in \Kx_d$ are a sequence of parameters if $\sqrt{\langle p_1, \dots, p_n \rangle} = \langle x_1, \dots, x_n \rangle$.
\end{definition}

The following theorem is a version of Cayley-Bacharach theorem (see \cite[Theorem CB8]{eisenbud96}).

\begin{theorem}
\label{thm:gorenstein}
Suppose that $p_1, \dots, p_n \in \Kx_d$ are a sequence of parameters and let $I$ be the ideal generated by $p_1, \dots, p_n$ in $\Kx$. Then $I$ is a Gorenstein ideal with socle of degree $n(d-1)$.
In particular,  the Hilbert function of $I$ is symmetric ($\HF_k(I) = \HF_{n(d-1)-k}$), $\HF_{n(d-1)}(I) = 1$, and for any homogeneous polynomial $p \not\in I$ of degree smaller than $n(d-1)$, $\HF_{n(d-1)}(I+\langle p \rangle) = 0$. Moreover, the Hilbert function of an ideal generated by a sequence of parameters  is the same for every such sequence.
\end{theorem}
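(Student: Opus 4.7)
The plan is to reduce every assertion to the single fact that $p_1, \ldots, p_n$ form a regular sequence, since then $\Kx/I$ is a complete intersection, hence Gorenstein, and the remaining statements are formal consequences. First I would observe that since $\sqrt{I} = \langle x_1, \ldots, x_n \rangle$, the ideal $I$ is primary to the irrelevant maximal ideal, so $p_1, \ldots, p_n$ is a homogeneous system of parameters in the Cohen--Macaulay ring $\Kx$, and is therefore automatically a regular sequence. Hence $A := \Kx/I$ is a graded Artinian complete intersection, which is the prototypical example of a graded Gorenstein $K$-algebra, and its socle is one-dimensional.

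Next I would compute the Hilbert series of $A$ using the Koszul complex on $p_1, \ldots, p_n$, which is a minimal graded free resolution of $A$ over $\Kx$. Taking alternating sums of the Hilbert series of the Koszul terms yields
\[
\sum_k \HF_k(I)\, t^k \;=\; \frac{(1-t^d)^n}{(1-t)^n} \;=\; (1 + t + \cdots + t^{d-1})^n.
\]
This polynomial is palindromic, has degree $n(d-1)$, and has leading coefficient $1$, which immediately yields both the symmetry $\HF_k(I) = \HF_{n(d-1)-k}(I)$ and the identity $\HF_{n(d-1)}(I) = 1$, recovering the claim that the socle sits in degree $n(d-1)$. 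Since the closed form of the series depends only on $n$ and $d$, the Hilbert function is the same for every sequence of parameters of that shape.

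For the final assertion I would invoke Gorenstein duality for Artinian graded Gorenstein $K$-algebras. Because $A$ has one-dimensional socle concentrated in degree $n(d-1)$, the multiplication pairings $A_k \times A_{n(d-1)-k} \to A_{n(d-1)} \cong K$ are perfect for every $k$. Therefore, given a nonzero class $[p] \in A_k$ with $k < n(d-1)$, there exists $[q] \in A_{n(d-1)-k}$ with $[pq]$ spanning $A_{n(d-1)}$. Equivalently, the principal ideal $\langle p \rangle$ meets the socle of $A$ nontrivially, so $(I + \langle p \rangle)_{n(d-1)} = \Kx_{n(d-1)}$, which is exactly $\HF_{n(d-1)}(I + \langle p \rangle) = 0$.

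The main obstacle is bookkeeping rather than substance: none of the ingredients (Cohen--Macaulayness of $\Kx$, the Koszul resolution of a regular sequence, Gorenstein duality for Artinian complete intersections) is deep, but one must carefully fix the convention $\HF_k(I) = \dim_K (\Kx/I)_k$ and verify that the Hilbert series computation really locates the socle in degree $n(d-1)$ with multiplicity one, so that the perfect pairing can be applied in the last step. Once that is pinned down, the theorem becomes a chain of classical citations rather than a new calculation.
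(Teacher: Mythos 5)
Your proof is correct, and the chain of reasoning is sound: parameters in the Cohen--Macaulay ring $\Kx$ form a regular sequence, so $\Kx/I$ is an Artinian complete intersection and hence Gorenstein; the Koszul resolution gives the closed-form Hilbert series $(1+t+\cdots+t^{d-1})^n$, from which the symmetry, the top-degree value $1$, the location of the socle, and the independence of the Hilbert function from the chosen sequence all follow; and the perfect pairing $A_k \times A_{n(d-1)-k} \to A_{n(d-1)}$ gives the final assertion. The paper itself does not prove this statement---it simply cites it as Theorem CB8 of the Eisenbud--Green--Harris survey on Cayley--Bacharach theorems---so what you have done is reconstruct the standard proof that underlies that citation. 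The ingredients you use (complete intersection $\Rightarrow$ Gorenstein, Koszul Hilbert series, Gorenstein duality) are exactly the classical machinery the cited survey develops, so the routes coincide in substance; the only difference is that you have written the argument out in full rather than appealing to the reference.
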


\begin{example}
Let $I = \langle x^3, y^3, z^3 \rangle \subset \R[x,y,z]$. In this case $n = 3$ and $d = 3$, hence $I$ has socle of degree $n(d-1)= 6$. The Hilbert functions for $I$ and $J = I  + \langle x^2 y\rangle$ are shown in Table \ref{table:hilbert}.

\begin{table}[tbhp]
\begin{center}
\begin{tabular}{c|ccccccc}
$d$ & 0 & 1 & 2 & 3 & 4 & 5 & 6  \\ \hline
$\HF_d(I)$ & 1 & 3 & 6 & 7 & 6 & 3 & 1 \\
$\HF_d(J)$ & 1 & 3 & 6 & 6 & 4 & 1 & 0
\end{tabular}
\caption{The Hilbert function for $I = \langle x^3, y^3, z^3\rangle$ and $J = \langle x^3, y^3, z^3, x^2y \rangle \subset \R[x,y,z]$}
\label{table:hilbert}
\end{center}
\end{table}
\end{example}

G. Blekherman uses Theorem \ref{thm:gorenstein} to prove Theorems \ref{blek36} and \ref{blek44}. The crucial condition the author uses to prove both theorems is that for  $(n,2d)=(3,6)$ and $(4,4)$,  $n(d-1) = 2d$, and these are the only two cases for which the equality holds.
In Section \ref{section:Hankel} we review extensions of Theorems \ref{blek36} and \ref{blek44} to arbitrary values of $(n,2d)$.

%This is why the problem of trying to extend Theorems \ref{blek36} and \ref{blek44} to the general case is not easy. In Section \ref{secEGH} we study this problem.

%Blekherman uses Theorem \ref{thm:gorenstein} to study strictly positive polynomials in the boundary of the cones $\Sigma_{3,6}$ and $\Sigma_{4,4}$. Those are the only two cases where $n(d-1) = 2d$. This crucial property allows him to prove the following results.
%
%\begin{theorem}[{\cite[Corollary 1.3]{blekherman}}]\label{blek36}Suppose $f\in\partial\Sigma_{3,6}, f>0$. Then $f$ can be written as a sum of 3 squares and cannot be written as a sum of 2 squares.
%\end{theorem}
%
%\begin{theorem}[{\cite[Corollary 1.4]{blekherman}}]\label{blek44} Suppose $f\in\partial\Sigma_{4,4}, f>0$. Then $f$ can be written as a sum of 4 squares and cannot be written as a sum of 3 squares.
%\end{theorem}

\begin{example}
\label{hilbert8}
To see how the situation changes when there are more variables or the polynomials have higher degree, we look at the Hilbert function of $I = \langle x^3, y^3, z^3, w^3 \rangle \subset \R[x,y,z,w]$ (see Table \ref{table:hilbert4}).

\begin{table}[tbhp]
\begin{center}
\begin{tabular}{c|ccccccccc}
$d$ & 0 & 1 & 2 & 3 & 4 & 5 & 6 & 7 & 8  \\ \hline
$\HF_d(I)$ & 1 & 4 & 10 & 16 & 19 & 16 & 10 & 4 & 1
\end{tabular}
\caption{Hilbert function for $I = \langle x^3, y^3, z^3, w^3 \rangle \subset \R[x,y,z,w]$}
\label{table:hilbert4}
\end{center}
\end{table}

By Theorem \ref{thm:gorenstein}, any other ideal generated by a sequence of parameters of the same degrees will have the same Hilbert function. This theorem also says that if we add to $I$ a homogeneous polynomial not in $I$ of degree smaller than or equal to 8, the Hilbert function at $d = 8$ will become 0. In our case,  as we will explain in Sections \ref{section:Hankel} and \ref{section:HF}, we are interested in knowing how many linearly independent polynomials of degree 3 can we add to $I$ without the Hilbert function becoming 0 at $d = 6$. Surprisingly, this number can also be read from the corresponding value of the Hilbert function. We will see in Section \ref{section:Hankel} and  Section \ref{section:HF} that for any ideal $J$ generated by a sequence of parameters in $\R[x,y,z,w]_3$, the value $\HF_6(J) = 10$ implies that whenever we add 10 linearly independent polynomials of degree 3 to $J$, the Hilbert function of the new ideal will become $0$ at $d = 6$. Similarly the value $\HF_7(J)=4$ implies that whenever we add 4 polynomials to $J$, the new Hilbert function will become 0 at $d = 7$. However this property cannot be applied to the whole Hilbert function. In Section \ref{section:HF} (Theorem \ref{thm:HFvsN}) we explicit the values of $d$ for which the number $\HF_d(J)$ gives such information. In the cases $d=6$ and $d=7$ the bound is optimal. That is, if we add one less polynomial, the Hilbert function may not become 0 at that value of $d$.
\end{example}

%In Section \ref{section:HF} we study the Hilbert function of special ideals. In Sections \ref{section:Hankel} and \ref{section:HF} we discuss how those results can be extended to get bounds for the number of polynomials in sum of squares decompositions and for the rank of matrices in the spectrahedron.

\section{The Hankel index}
\label{section:Hankel}

In the paper ``Do Sums of Squares Dream of Free Resolutions'', G. Blekherman, R. Sinn and M. Velasco \cite{BSV} study the maximum number of polynomials that can appear in a SOS
decomposition of a strictly positive polynomial in the boundary of the SOS cone in the more general context of totally real varieties.

We restrict in this paper to the space $H_{n,d}$ of homogeneous polynomials of degree $d$ in $\R[x_1, \dots, x_n]$. Specializing their results to our case (that is, considering $X = \nu_d(\Proj^{n-1})$, the $d$th Veronese variety), they call \emph{Hankel index} of $H_{n,d}$, which we note $\Hankel(n,d)$, the smallest number $k \ge 1$ such that any strictly positive polynomial in the boundary of $\Sigma_{n,2d}$ is the sum of at most $\dim(H_{n,d}) - k$ squares of linearly independent polynomials.

The authors observe the following property (see \cite[Theorem 4]{BSV}).

%\begin{theorem}
%\label{thm:hankel}
%If any collection of $k$ linearly independent homogeneous polynomials in $H_{n,d}$ containing a sequence of parameters generate all $H_{n,2d}$ then a strictly positive polynomial in the boundary of the SOS cone is the sum of at most $k-1$ polynomials. That is, the Hankel index is at least $\dim(H_{n,2d}) - (k+1)$.
%\end{theorem}

\begin{theorem}
\label{thm:hankel} If $H_{n,2d}$ is included in any ideal generated by $\dim(H_{n,d}) - k$ linearly independent homogeneous polynomials in $H_{n,d}$ containing a sequence of parameters, then any strictly positive polynomial in the border of $\Sigma_{n, 2d}$ is the sum of at most $\dim(H_{n,d}) - k - 1$ polynomials. That is, the Hankel index is at least $k+1$.
\end{theorem}

%When $D$ verifies the hypothesis of the lemma for $H_{n,d}$, noting $p = \dim H_{n,2d} - D$, they say that $H_{n,d}$ satisfies the $p$-base-point-free property
% The problem of determining the maximum number of polynomials that can appear in a sum of squares decomposition of a polynomial in $\partial\Sigma_{n,d}^+$ is therefore equivalent to computing the maximum $p$ such that $H_{n,d}$ satisfies the $p$-base-point-free property.

%Moreover, the authors relate the $p$-base-point-free property to the homological property $N_{2,p}$. In our setting, $H_{n,d}$ is said to satisfy the property $N_{2,p}$ if the first $p-1$ mappings in a minimal free resolution of the $d$th Veronese embedding $\nu(\Proj^{n-1})$ are represented by matrices of linear forms. The largest $p$ for which $H_{n,d}$ satisfies the property $N_{2,p}$ is called the Green-Lazarsfeld index of $H_{n,d}$ (see \cite{BCR}).

Moreover, the authors relate the Hankel index to the homological property $N_{2,p}$. In our setting, $H_{n,d}$ is said to satisfy the property $N_{2,p}$ if the first $p-1$ mappings in a minimal free resolution of the $d$th Veronese embedding $\nu(\Proj^{n-1})$ are represented by matrices of linear forms. The largest $p$ for which $H_{n,d}$ satisfies the property $N_{2,p}$ is called the Green-Lazarsfeld index of $H_{n,d}$ (see \cite{BCR}).

\begin{example}
For $H_{3,3}$, we can easily compute in a computer algebra system the Betti table of $\nu_{3}(\Proj^2)$, which is given in Table \ref{table:3rd}. We observe that the first 6 mappings are linear but the 7th is not, hence $H_{3,3}$ satisfies the $N_{2,6}$ property but not the $N_{2,7}$ property. This proves that the Green-Lazarsfeld index of $H_{3,3}$ is 6.

\begin{table}[h!]
\begin{center}
\begin{tabular}{|ccccccccc|}
\hline
  & 0 & 1 & 2 & 3 & 4 & 5 & 6 & 7 \\ \hline
0 & 1 & - & - & - & - & - & - & - \\ \hline
1 & - & 27 & 105 & 189 & 189 & 105 & 27 & - \\ \hline
2 & - & - & - & - & - & - & - & 1 \\ \hline
\end{tabular}
\caption{Graded Betti numbers for the 3rd Veronese embedding of $\Proj^2$}
\label{table:3rd}
\end{center}
\end{table}
\end{example}

In \cite{BSV} the authors observe that if $H_{n,d}$ satisfies the $N_{2,p}$ property, then the Hankel index is at least $p$, this was proved by D. Eisenbud, C. Huneke and B. Ulrich \cite[Corollary 5.2]{EHU}.
Ottaviani and Paoletti \cite{OP} proved that the Green-Lazarsfeld index is 5 for $d=2$ and conjectured that it is $3d-3$ for $d \ge 3$. This conjecture was proved for $d = 3$ and $d = 4$ by Thanh Vu (\cite{ThanhVu1} and \cite{ThanhVu2}) and it is still open for $d \ge 5$. For $H_{3,5}$ (that is, $d = 5$ and $n=3$), the full Betti table is computed in \cite{GrecoMartino} and for $H_{3,6}$ the required entries of the Betti table are computed in \cite{CCDL}, showing that the conjecture also holds in these cases.

%{\small
%\noindent
%\begin{table}[h!]
%\caption{Graded Betti numbers for the 5th Veronese embedding of $\Proj^2$}
%\label{table:5th}
%\begin{tabular}{|cccccccccccc|}
%\hline
%0 & 1 & 2 & 3 & 4 & $\dots$ & 10 & 11 & 12 & 13 & 14 & $\dots$ \\
%\hline
%1 & . & . & . & . & $\dots$ & . & . & . & . & . & . \\
%\hline
%. & 165 & 1830 & 10710 & 41616  & $\dots$ & 464100 & 291720 & 134640 & 39780 & 4858 & $\dots$ \\
%\hline
%. & . & . & . & . & $\dots$ & . & . & . & 2002 & 4200 & $\dots$ \\
%\hline
%\end{tabular}
%\end{table}
%}

%\begin{table}[h!]
%\label{table:4th}
%\centering
%\begin{tabular}{|c|c|c|c|c|c|c|c|c|c|c|c|c|c|}
%\hline
%& 0 & 1 & 2 & 3 & 4 & 5 & 6 & 7 & 8 & 9 & 10 & 11 & 12 \\ \hline
%0 & 1 & - & - & - & - & - & - & - & - & - & - & - & - \\ \hline
%1 & - & 75 & 536 & 1947 & 4488 & 7095 & 7920 & 6237 & 3344 & 1089 & 120 & - & - \\ \hline
%2 & - & - & - & - & - & - & - & - & - & - & 55 & 24 & 3 \\ \hline
%\end{tabular}
%\caption{Betti numbers for the 4th Veronese embedding}
%\end{table}

We can summarize these results in our setting.
\begin{enumerate}
\item For $d = 2$ and $n \ge 4$, $\Hankel(n,d) \ge 6$,
\item for $d = 3$ and $n \ge 3$, $\Hankel(n,d) \ge 7$,
\item for $d = 4$ and $n \ge 3$, $\Hankel(n,d) \ge 10$,
\item for $d \ge 5$ and $n \ge 3$, it is conjectured that $\Hankel(n,d) \ge (3d - 2)$ (verified computationally for $(n,d)=(3,5)$ and $(n,d)=(3,6)$),
\end{enumerate}
and the number of polynomials that can appear in a SOS decomposition of a polynomial in $\partial \Sigma_{n,2d}^+$ is at most $\dim H_{n,d} - \Hankel(n,d)$.

\begin{example}
A strictly positive polynomial of degree $2d = 8$ in $3$ variables in the boundary of the SOS cone is the sum of at most $15 - 10 = 5$ polynomials, since $\dim H_{3,4} = 15$.
\end{example}

In \cite{BSV}, the authors ask in which cases these bounds are sharp, that is, there exists a polynomial in $\partial \Sigma_{n,2d}^+$ with a SOS decomposition consisting of the maximum possible number of polynomials given by those bounds. The authors relate the above properties to another property. The space $H_{n,d}$ is called $p$-small if for every zero-dimensional ideal $I$ generated by $p$ linearly independent homogeneous polynomials of degree $d$, the points in $\Gamma = V(I)$ are independent (that is, for $\{u_{\gamma}\}_{\gamma \in \Gamma} \subset \R$, $\sum_{\gamma \in \Gamma} u_\gamma q(\gamma) = 0$ for all homogeneous polynomials $q \in H_{n,d}$, if and only if $u_{\gamma} = 0$ for all $\gamma \in \Gamma$).
In \cite[Section 4]{BSV} the authors extend the construction given in \cite[Section 6]{blekherman} for $\Sigma_{3,6}$ and $\Sigma_{4,4}$ to more general varieties. They provide a way to construct a polynomial in $\partial \Sigma_{n,2d}^+$ which can be decomposed as the sum of $\dim(H_{n,d}) - p$ linearly independent polynomials in $H_{n,d}$ when an explicit ideal $I$ showing that $H_{n,d}$ is not $p$-small for a given $p$ is known.
In Section \ref{section:sharpbounds} we will use a refinement of these results to construct explicit examples of strictly positive polynomials in the boundary of the SOS cone $\Sigma_{n,2d}$ that prove that the bounds above are sharp for all values of $n \ge 3$ and $d \ge 4$.

\section{The Hilbert function and the Eisenbud-Green-Harris conjecture}
\label{section:HF}

%We now give a more elementary treatment to the questions presented in the last section, and we prove some results for special cases.

Let $\Kx = K[x_1,\dots,x_n]$ be the polynomial ring in $n$ variables over a field $K$
and $\Kx_d$ the set of homogeneous polynomials in $\Kx$ of degree $d$, for a fixed $d\ge 1$.
As we have seen in the previous section, to bound the number of polynomials that can appear in a SOS decomposition of a positive polynomial in the border of the SOS cone, we can compute the minimum number $N$ such that for any ideal $I$ generated by $N$ linearly independent homogeneous polynomials of degree $d$ containing a sequence of parameters, the Hilbert function $\HF_{2d}(I) = 0$. We consider in this section the more general problem of computing, for any $k>d$, the minimum number $N(n,d,k)$ such that for any ideal $I$ containing a sequence of parameters generated by $N(n,d,k)$ linearly independent homogenous polynomials of degree $d$, $\HF_k(I) = 0$. Assuming a conjecture by Eisenbud, Green and Harris, we will obtain bounds for $N(n,d,k)$ which for the case $k = 2d$ coincide with the bounds given in the previous section.

\subsection{Leading powers ideals}
\label{subsection:leadingpowers}
We assume first a stronger condition than containing a sequence of parameters, under which the bounds can be proved without depending on conjectures. Most of the results in sections \ref{subsection:leadingpowers} and \ref{subsection:monomialideals} might be known or are direct corollaries of known results, and we include them for lack of concrete references.

\begin{lemma}\label{sqrtI}
Let $I$ be a homogeneous ideal generated by polynomials in $\Kx_d$ such that $\{x_1^d, \dots, x_n^d\} \subset \Lt(I)$, where $\Lt(I)$ is the ideal generated by the leading terms of the polynomials in $I$ for a fixed monomial ordering. Then $\sqrt{I} = \langle x_1, \dots, x_n\rangle$.
\end{lemma}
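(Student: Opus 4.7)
The plan is to prove both inclusions $\sqrt{I} \subseteq \langle x_1,\ldots,x_n\rangle$ and $\langle x_1,\ldots,x_n\rangle \subseteq \sqrt{I}$. The first inclusion is immediate: since $d \ge 1$, every generator of $I$ lies in the maximal ideal $\langle x_1,\ldots,x_n\rangle$, so $I \subseteq \langle x_1,\ldots,x_n\rangle$, and because that ideal is prime (indeed maximal), taking radicals preserves the containment.

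The substance lies in the reverse inclusion, for which I would show that $\Kx/I$ is finite-dimensional over $K$. I would invoke the standard Gr\"obner-basis fact that for any monomial order, a homogeneous ideal and its initial ideal share the same Hilbert function in each degree. The hypothesis $\{x_1^d,\ldots,x_n^d\} \subset \Lt(I)$ gives $\langle x_1^d,\ldots,x_n^d\rangle \subseteq \Lt(I)$, so $\Kx/\Lt(I)$ is a quotient of $\Kx / \langle x_1^d,\ldots,x_n^d\rangle$, which has the finite $K$-basis $\{x_1^{a_1}\cdots x_n^{a_n} : 0 \le a_i < d\}$. Consequently $\Kx/\Lt(I)$, and hence $\Kx/I$, has finite dimension over $K$.

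Since $I$ is homogeneous, $\Kx/I$ inherits a grading, and finite total $K$-dimension forces $(\Kx/I)_m = 0$ for all sufficiently large $m$, i.e.\ $\Kx_m \subseteq I$ for $m \gg 0$. In particular $x_i^m \in I$ for some $m$ and every $i$, so each $x_i$ lies in $\sqrt{I}$, yielding $\langle x_1,\ldots,x_n\rangle \subseteq \sqrt{I}$ and completing the proof. I do not foresee any real obstacle; the only subtle point is that homogeneity of $I$ is what lets us pass from ``$\Kx/I$ is finite-dimensional'' to ``pure powers of the variables lie in $I$'', rather than merely ``each $x_i$ is the root of some polynomial modulo $I$''.
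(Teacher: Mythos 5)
Your proof is correct, but it takes a different route from the paper. The paper argues directly: fix $p_1,\dots,p_n\in I$ with $\lt(p_i)=x_i^d$, take any homogeneous $p$ of degree $n(d-1)+1$, and run the division algorithm against $\{p_1,\dots,p_n\}$. A nonzero remainder would have a leading monomial of degree $n(d-1)+1$ divisible by no $x_i^d$, which the pigeonhole principle rules out; hence $p\in I$, so $\Kx_{n(d-1)+1}\subseteq I$ and in particular each $x_i^{n(d-1)+1}\in I$. Your argument instead appeals to the standard Gr\"obner-basis fact that $I$ and $\Lt(I)$ have the same Hilbert function and deduces finite-dimensionality of $\Kx/I$ from the finite-dimensionality of $\Kx/\langle x_1^d,\dots,x_n^d\rangle$. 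Both are sound. The paper's version is more self-contained (the division-algorithm argument it runs is essentially the proof of the Hilbert-function fact you cite as a black box) and, more to the point for this paper, it produces an explicit degree bound, $n(d-1)+1$, rather than a nonquantitative ``for $m\gg 0$''; that explicit degree, and the sharpened version $n(d-1)$ from Theorem~\ref{thm:gorenstein}, is exactly what is exploited in the later sections. Your version has the minor advantage of being slightly more general — it does not actually use that $I$ is generated in a single degree, only that $I$ has no nonzero constants and that $\langle x_1^d,\dots,x_n^d\rangle\subseteq\Lt(I)$ — and it makes the easy inclusion $\sqrt I\subseteq\langle x_1,\dots,x_n\rangle$ explicit, which the paper leaves implicit.
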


\begin{proofels}
Let $p \in \Kx_{n(d-1)+1}$ be any homogeneous polynomial of degree $n(d-1)+1$. Let $p_1, \dots, p_n \in I$ be homogeneous polynomials of degree $d$ such that $\lt(p_i) = x_i^d$. The remainder $r$ in the division of $p$ by the polynomials $\{p_1, \dots, p_n\}$ is $0$ or a polynomial of the same degree as $p$ whose leading term is not divisible by any of the leading terms of the polynomials $p_i$, for all $1 \le i \le n$. But, by the pigeonhole principle, for every monomial $m \in \Kx_{n(d-1)+1}$ there exists $i$ such that $\lt(p_i)$ divides $m$, hence $r$ must be $0$ and therefore $p \in I$.
\end{proofels}

We call \emph{leading powers ideal} any ideal satisfying the hypothesis of Lemma \ref{sqrtI}. % (see Remark \ref{seqofp} below).
In this section we prove two extensions of Theorem \ref{thm:gorenstein} for the case of leading powers ideals. In Theorem \ref{NHF3} and Corollary \ref{NHF4} we determine for any $k > d$ the minimum number $N=N(n,d,k)$ such that for any leading powers ideal $I$ generated by $N$ linearly independent polynomials of degree $d$, $\HF_k(I) = 0$.

\subsection{Monomial ideals}
\label{subsection:monomialideals}

We will first prove some results for monomial ideals and then extend the results to arbitrary leading powers ideals.

\begin{remark}\label{seqofp} The only sequence of parameters formed by monomials in $\Kx_d$ is $x_1^d,\dots,x_n^d$.
Indeed, let $\{m_1,\dots, m_n\}$ be a monomial sequence of parameters. If $x_1^d$ is not one of those monomials, then $m_i(1,0,\dots,0)=0$, for all $1\le i\le n$ and the monomials in the sequence have a common root.
\end{remark}

The above remark shows that for an ideal $M$ generated by monomials of degree $d$, the following three conditions are equivalent: $M$ contains a sequence of parameters, $M$ is a leading powers ideal, and $M$ contains the monomials $x_1^d,\dots,x_n^d$.

%, which contradicts the fact that they are a sequence of parameters. Thus necessarily $x_1^d$ is one of them, and in an analogous way, the same is true for $x_2^d,\dots,x_n^d$.
%\end{proofels}

We want to compute, for a monomial ideal $I$ containing $\{x_1^d,\dots, x_n^d\}$, the number of monomials of degree $d$ that the ideal must have in order to assure that $I_k=\Kx_k$, for a given $k>d$, where $I_k$ is the set of polynomials in $I$ of degree $k$. This problem naturally suggests us to study, for a given monomial $m$ of degree $k$ not in $I$, the number of monomials of degree $d$ dividing $m$. Since $\{x_1^d,\dots, x_n^d\}\subset I$, if the monomial $m$ has degree greater than or equal to $d$ in one of the variables, it is obviously in $I$. So  we consider monomials $m$ with degree in each variable bounded by $d-1$, and in particular, we are interested in the monomials with the minimum number of monomial divisors of degree $d$.

We start with the following elementary lemma.

\begin{lemma}
Let $m_1 = x^a y^b$ be a monomial, with $a \ge b$. For a fixed $d \le a + b$, the number of divisors of $m_1$ of degree $d$ is greater than or equal to the number of divisors of $m_2 = x^{a+1}y^{b-1}$.
\end{lemma}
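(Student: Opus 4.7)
The plan is to count divisors of each monomial directly by parametrizing them by the exponent of $x$. A degree-$d$ monomial divisor of $x^a y^b$ has the form $x^i y^{d-i}$, and must satisfy $0 \le i \le a$ and $0 \le d-i \le b$; so the divisors are in bijection with the integers $i$ in the interval
\[ I_1 \;=\; \bigl[\max(0, d-b),\, \min(a, d)\bigr]. \]
Analogously, the degree-$d$ divisors of $m_2 = x^{a+1} y^{b-1}$ are in bijection with the integers in
\[ I_2 \;=\; \bigl[\max(0, d-b+1),\, \min(a+1, d)\bigr]. \]
So the statement becomes the inequality $|I_1| \ge |I_2|$, where $|\cdot|$ denotes the number of integers in the interval.

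Next I would compare the endpoints termwise. Each endpoint of $I_2$ is obtained from the corresponding endpoint of $I_1$ by shifting up by either $0$ or $1$:
\begin{itemize}
\item the lower endpoint shifts by $1$ iff $d \ge b$ (otherwise both are $0$);
\item the upper endpoint shifts by $1$ iff $d \ge a+1$ (otherwise both equal $d$).
\end{itemize}
These are straightforward to verify from the definitions of $\max$ and $\min$.

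The final step uses the hypothesis $a \ge b$: the ``bad'' configuration in which the upper endpoint shifts but the lower does not cannot occur, because $d \ge a+1$ implies $d \ge b+1 > b$, forcing the lower endpoint to shift as well. Hence the joint shift pattern $(\text{lower},\text{upper})$ is one of $(0,0)$, $(1,0)$, $(1,1)$, and in each case $|I_2| - |I_1| \in \{0, -1\}$, proving the lemma. The only real work is organizing this small case analysis cleanly; there is no deeper obstacle.
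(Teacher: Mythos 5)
Your proof is correct, and it takes a genuinely different (and more careful) route than the paper. The paper's proof is a one-liner asserting that the number of degree-$d$ divisors of $x^ay^b$ equals $\min\{a,b,d\}+1$ and then comparing the two resulting formulas; however, that closed-form count is actually wrong when $d>a$: for instance $x^3y^2$ has only two degree-$4$ divisors ($x^3y$ and $x^2y^2$), not $\min\{3,2,4\}+1=3$. The correct count is $\min(a,d)-\max(0,d-b)+1$, which is exactly the length of your interval $I_1$; it equals $\min\{a,b,d\}+1$ only when $d\le a$, and equals $a+b-d+1$ otherwise. Your approach --- parametrizing divisors by the $x$-exponent, representing the count as the number of integers in an interval, and tracking how each endpoint shifts by $0$ or $1$ --- sidesteps the need for a closed formula entirely and makes the role of the hypothesis $a\ge b$ transparent (it rules out the only shift pattern $(0,1)$ that would make the count increase). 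Incidentally, your analysis also reveals something the paper's argument obscures: when both $a,b<d$ (the case actually used downstream in Corollary~\ref{mnkd}), the shift pattern is $(1,1)$ and the count is \emph{unchanged}, not strictly smaller. The inequality in the lemma is still what is needed, but the paper's remark in the proof of Corollary~\ref{mnkd} that the number ``is smaller'' should read ``is smaller than or equal to.''
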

\begin{proofels}
The number of divisors of degree $d$ of $m_1$ is $\min\{a,b,d\} + 1$ and the number of divisors of $m_2$ is $\min\{a+1,b-1,d\} + 1 \le  \min\{a,b,d\} + 1$, because $a \ge b$.
\end{proofels}

We derive the following result.

\begin{corollary}
\label{mnkd} Let $n\ge 1$, $d\ge 2$ and $d\le k\le n(d-1)$. Let $q$ and $r$ be respectively the quotient and the remainder in the division of $k$ by $d-1$. This is, $k=q(d-1)+r$, with $0\le r <d-1$.
The monomial $x_1^{d-1}\cdots x_q ^{d-1}x_{q+1}^r$ has the smallest number of monomial divisors in $\Kx_d$ among all monomials in $\Kx_k$ that are not divisible by any $x_i^d$, $1 \le i \le n$.
\end{corollary}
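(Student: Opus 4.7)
My strategy is to transform an arbitrary admissible monomial (one with all exponents at most $d-1$) into the claimed extremal monomial through a sequence of pairwise ``unbalancing'' moves on exponents, each of which cannot increase the number of degree-$d$ monomial divisors; the preceding lemma is the workhorse applied at each step.

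First I would extend the preceding lemma from two to $n$ variables: if $m = x_1^{a_1}\cdots x_n^{a_n}$ and we replace $(a_i, a_j)$ by $(a_i+1, a_j-1)$ for indices with $a_i \ge a_j$, then the number of degree-$d$ monomial divisors of $m$ does not increase. This follows by partitioning the divisors of $m$ according to the combined degree $s$ in the variables $x_i, x_j$:
\[
\#\{\text{degree-}d\text{ divisors of } m\} \;=\; \sum_{s=0}^{d} D_{ij}(s)\, D'(d-s),
\]
where $D_{ij}(s)$ counts degree-$s$ divisors of $x_i^{a_i}x_j^{a_j}$ and $D'$ counts divisors of the remaining factor. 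Applying the preceding lemma with $s$ in place of $d$ shows each $D_{ij}(s)$ weakly decreases under the move, while $D'$ is unchanged; summing yields the inequality.

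Next I would run the reduction. Renaming variables, I may assume $a_1 \ge a_2 \ge \cdots \ge a_n$, which does not alter the divisor count. If this tuple is not already $(d-1,\dots,d-1,r,0,\dots,0)$, let $i$ be the smallest index with $a_i < d-1$ and $j$ the largest with $a_j > 0$; a short case analysis shows $i < j$. Since the tuple is sorted, $a_i \ge a_j$, so by the previous step transferring one unit from position $j$ to $i$ does not increase the divisor count. Brief checks at positions $i{-}1,i,i{+}1$ and $j{-}1,j,j{+}1$ confirm that the new tuple remains weakly decreasing and strictly increases in lexicographic order. Since there are only finitely many sorted admissible tuples of sum $k$, the process terminates, and it can only terminate at the unique lex-maximum, which by inspection is $(d-1,\dots,d-1,r,0,\dots,0)$. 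Hence the divisor count of the starting monomial dominates that of the extremal monomial.

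The main technical point is the extension of the preceding lemma to $n$ variables; once that is done, the remainder is essentially a sorting argument, with the only extra detail being to verify that each elementary move preserves the decreasing order and that the target is indeed the unique lex-maximal admissible sorted tuple of sum $k$ (which is immediate from the constraints and the definition of $q,r$).
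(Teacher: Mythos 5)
Your proof is correct and takes essentially the same exchange-argument approach as the paper's: transfer one unit of exponent from a smaller slot to a larger one, invoke the preceding two-variable lemma to see that this cannot increase the degree-$d$ divisor count, and conclude that the extremal shape is reached. If anything you are more careful than the printed proof, which jumps straight to a contradiction by asserting a \emph{strict} decrease in the divisor count; your iterative descent uses only the weak inequality the lemma actually provides, and your explicit lift of the lemma from two to $n$ variables (summing $D_{ij}(s)\,D'(d-s)$ over the joint degree $s$ in the two chosen variables) fills in a step the paper leaves to the reader.
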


\begin{proofels}
It follows immediately from the previous lemma. Let $m_1 \in \Kx_k$ be the monomial of degree $k$ with the smallest number of monomials divisors of degree $d$ and not divisible by any $x_i^d$, $1 \le i \le n$. Suppose that there exist $1 \le i, j \le n$ such that the exponents $a_i$, $a_j$ of $x_i$, $x_j$ are positive and smaller than $d-1$, and $a_i \ge a_j$. Set $m_2$ equal to $m_1$ except for the exponents of $x_i$ and $x_j$ which are set as $a_i + 1$ and $a_j -1$ respectively. Then the number of divisors of $m_2$ is smaller than the number of divisors of $m_1$, which is a contradiction.
\end{proofels}

We denote
\begin{equation}
\label{mndk}m(n,d,k) = x_1^{d-1}\cdots x_q ^{d-1}x_{q+1}^r, \end{equation}
 where $q$ and $r$ are the quotient and the remainder in the division of $k$ by $d-1$,
 the monomial of degree $k$ with the smallest number of monomial divisors of degree $d$ among all monomials of degree $k$ not divisible by any element $x_1^{d}, \dots, x_n^d$. Note that if $k > n(d-1)$ there are no monomials satisfying the conditions and if
 $k= n(d-1)$, $q+1> n$ but in this case $r=0$ so we can ignore the last factor.

In the following lemma we compute the number of monomial divisors of degree $d$ of the monomial $m(n,d,k)$, defined in \eqref{mndk}.

\begin{lemma}
\label{lemma:c}
%Let $D(n,d,k)$ be the set of monomials in $\Kx_d$ dividing the monomial $m(n,d,k)$ and $C(n,d,k) =|D(n,d,k)|$, the number of divisors of degree $d$ of $m(n,d,k)$. Suppose $k=q(d-1)+r$, with $0\le r <d-1$. Then:
Let $C(n,d,k)$ be the number of divisors of degree $d$ of $m(n,d,k)$. Suppose $k=q(d-1)+r$, with $0\le r <d-1$. Then:
\begin{equation}\label{Cndk}
C(n,d,k)=\binom{q+d}{d}-\binom{q+d-r-1}{q}-q.\end{equation}
\end{lemma}

\begin{proofels} Assume first that $k<n(d-1)$, so $q+1\le n$, and assume also that $r>0$. In this case we can just compute the number of monomials in $q+1$ variables and degree $d$ dividing $m(n,d,k)$. The total number of monomials of degree $d$ in $q+1$ variables is $\binom{q+d}{d}$. In this set, the monomials not dividing $m(n,d,k)$ can be classified in two disjoint cases:
\begin{itemize}[leftmargin=*]
\item Those having degree greater than $r$ in the last variable. There are $\binom{q+d-r-1}{q}$ of them.
\item Those having degree $d$ in one of the first $q$ variables. There are $q$ of them.
\end{itemize}

This leads to the desired formula.
It is easy to see that the same expression holds for the case $r=0$ and also for the case $k=n(d-1)$.
\end{proofels}

Note that $m(n,d,k)$ and $C(n,d,k)$ do not depend directly on $n$, but the value of $n$ bounds the possible values of $k$ in the definition, so we include $n$ in the notation for the sake of clearness.

We introduce now the following notation.

\begin{notation} Set $n\ge 1$, $d\ge 2$ and $d\le k\le n(d-1)$. Suppose $k=q(d-1)+r$, with $0\le r <d-1$. We denote:
\begin{align}
 \label{Nndk} N(n,d,k)&=\dim(H_{n,d})-C(n,d,k)+1\\
 &=\binom{n+d-1}{d}-C(n,d,k)+1\\
\nonumber & =  \binom{n+d-1}{d}-\binom{q+d}{d}+\binom{q+d-r-1}{q}+q+1.\end{align}
\end{notation}

Observe that, since $C(n,d,k)$ is the number of monomials of degree $d$ dividing $m(n,d,k)$, $N(n,d,k)-1$ is the number of monomials of degree $d$ \emph{not dividing} $m(n,d,k)$.
%This complementary relation can be seen explicitly in the following results.

%We will associate first the number $N(n,d,k)$ to monomial ideals containing a monomial sequence of parameters of degree $d$.

Now we relate the number $N(n,d,k)$ to monomial ideals containing a sequence of parameters (recall Remark \ref{seqofp}).
\begin{proposition}\label{NHF}
Set $n\ge 1$, $d\ge2$, $d\le k \le n(d-1)$, and set $N=N(n,d,k)$ as in \eqref{Nndk}. Let $W = \{x_1^d,\dots,x_n^d\}$ and suppose that $\{m_{n+1},\dots,m_N\}$ is any set of monomials in $\Kx_d\setminus W$. Then the ideal $J =\langle x_1^d,\dots,x_n^d,m_{n+1},\dots,m_N\rangle$ satisfies $J_k=\Kx_k$, where $J_k$ is the set of polynomials in $J$ of degree $k$.

Moreover, the number $N$ is optimal. This is, there exists a set of monomials $\{m_{n+1},\dots,m_{N-1}\}$
 in $\Kx_d\setminus W$, such that for $J=\langle x_1^d,\dots,x_n^d,m_1,\dots,m_{N-1}\rangle$, $J_k \subsetneq \Kx_k$.
\end{proposition}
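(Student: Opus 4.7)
The plan is to view the proposition as a counting statement about monomials of degree $d$ and their relationship to degree-$k$ monomials via divisibility. Recall that $\binom{n+d-1}{d}$ is the total number of monomials of degree $d$ in $n$ variables, so by the definition \eqref{Nndk} of $N$, selecting $N$ monomials of degree $d$ as generators (as in the proposition) leaves out exactly $\binom{n+d-1}{d} - N = C(n,d,k) - 1$ monomials of degree $d$.

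For the first part, I would argue by contradiction. Suppose $J_k \subsetneq \Kx_k$. Since $J$ is a monomial ideal, there must be some monomial $m$ of degree $k$ with $m \notin J$. In particular $m$ is not divisible by any of $x_1^d, \ldots, x_n^d$, so every exponent of $m$ is at most $d-1$. By Corollary \ref{mnkd}, the monomial $m(n,d,k)$ minimizes the number of degree-$d$ monomial divisors among such monomials, hence $m$ has at least $C(n,d,k)$ divisors of degree $d$. None of these divisors can be a generator of $J$ (otherwise $m \in J$); moreover, none of them equals some $x_i^d$ (since $m$ has all exponents $<d$), so they all lie in $\Kx_d \setminus W$. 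Therefore the number of monomials in $\Kx_d \setminus W$ that are \emph{not} among $\{m_{n+1}, \ldots, m_N\}$ is at least $C(n,d,k)$. But this number is $|\Kx_d \setminus W| - (N - n) = \binom{n+d-1}{d} - n - (N-n) = C(n,d,k) - 1$, a contradiction.

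For the optimality claim, I would exhibit an explicit example. Take $\{m_{n+1}, \ldots, m_{N-1}\}$ to be precisely the monomials in $\Kx_d \setminus W$ that do \emph{not} divide $m(n,d,k)$. Since the $N-1$ monomials of degree $d$ not dividing $m(n,d,k)$ include all of $W$ (because each $x_i^d$ has degree in $x_i$ equal to $d$, while $m(n,d,k)$ has every exponent $<d$), there are exactly $N - 1 - n$ such non-$W$ monomials, matching the count. For $J = \langle x_1^d, \ldots, x_n^d, m_{n+1}, \ldots, m_{N-1}\rangle$, no generator divides $m(n,d,k)$, so $m(n,d,k) \in \Kx_k \setminus J_k$, giving $J_k \subsetneq \Kx_k$.

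There is no serious obstacle; the only thing to be careful about is keeping track of $W$ versus $\Kx_d \setminus W$ in the counting, and verifying that the extremal monomial $m(n,d,k)$ from Corollary \ref{mnkd} is legal in the range $d \leq k \leq n(d-1)$ under consideration (so that the quotient $q$ and remainder $r$ used in \eqref{mndk} give a well-defined monomial in $n$ variables). Everything else reduces to the minimality property of $m(n,d,k)$ already established.
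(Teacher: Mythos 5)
Your proof is correct and follows essentially the same approach as the paper: both rely on Corollary \ref{mnkd} and a pigeonhole count comparing $N$ against $C(n,d,k)$, with the same explicit construction for optimality. The only stylistic difference is that you argue the first part by contradiction while the paper argues directly; you are also slightly more careful in explicitly separating the count over $W$ from $\Kx_d\setminus W$, which is a minor but welcome clarification.
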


\begin{proofels}
We will prove the statement by showing that for every monomial $m\in \Kx_k$, we have $m\in J$. If $m=x_1^{i_1}\cdots x_n^{i_n}$, with $i_j\ge d$, for some $1 \le j \le n$, then $m\in J$. So it suffices to consider the monomials $=x_1^{i_1}\cdots x_n^{i_n}$ of degree $k$ such that $i_j<d$, for all $1\le j\le n$. By Corollary \ref{mnkd}, the monomial of this type with the minimum number of divisors is $m(n,d,k)=x_1^{d-1}\cdots x_q ^{d-1}x_{q+1}^r$, and the number of divisors is  $C(n,d,k)$, defined in \eqref{Cndk}. Therefore, if $J$ has at least $N(n,d,k)=\binom{n+d-1}{d}-C(n,d,k)+1$ monomials in degree $d$, necessarily one of them must divide $m(n,d,k)$. Since all other monomials in degree $k$ have more or equal number of divisors, the same is true for all of them and we are done.

The number $N$ is indeed optimal, since we can consider the set of monomials $\{x_1^d,\dots,x_n^d,m_{n+1},\dots,m_{N-1}\}$  in $\Kx_d$ not dividing the monomial $m(n,d,k)$.
Hence $m(n,d,k)\notin J=\langle x_1^d,\dots,x_n^d,m_{n+1},\dots,m_{N-1}\rangle$ and $J_k\neq \Kx_k$.
\end{proofels}

\begin{remark}
Proposition \ref{NHF} and the following results do not cover the case $d=1$ since the numbers $C(n,1,k)$ and $N(n,1,k)$ are not defined. But the case $d=1$ implies $\{x_1, \dots, x_n\} \subset I$, and all claims become trivial.
\end{remark}

We can read Proposition \ref{NHF} in terms of the Hilbert function of a monomial ideal as follows, using the complementary relation between $N(n,d,k)$ and $C(n,d,k)$.

\begin{corollary}\label{NHF2} Set $n\ge 1$, $d\ge 2$, $d\le k \le n(d-1)$, and consider $C(n,d,k)$, defined in \eqref{Cndk}. Let $J$ be an ideal in $\Kx$ generated by monomials in $\Kx_d$ and containing $\{x_1^d,\dots,x_n^d\}$. If $\HF_d(J)=C(n,d,k)-1$, then $\HF_k(J)=0.$

Moreover, the number $C(n,d,k)-1$ is optimal. This is, there exists an ideal $J$ in $\Kx$ generated by monomials in $\Kx_d$ and containing $\{x_1^d,\dots,x_n^d\}$, such that $\HF_d(J)=C(n,d,k)$ and $\HF_k(J)>0.$
\end{corollary}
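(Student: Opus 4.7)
The approach is to rephrase the hypothesis $\HF_d(J) = C(n,d,k)-1$ as a statement about the number of minimal monomial generators of $J$, and then invoke Proposition \ref{NHF} directly. Because $J$ is generated by monomials in $\Kx_d$, the vector space $J_d$ is spanned by those generators, and distinct monomials in $\Kx_d$ are linearly independent. Hence, if the minimal monomial generating set is $\{x_1^d,\dots,x_n^d, m_{n+1},\dots, m_M\}$, we have
\[
\HF_d(J) \;=\; \dim \Kx_d - \dim J_d \;=\; \binom{n+d-1}{d}-M.
\]

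Combining this with the hypothesis and with the definition \eqref{Nndk} of $N(n,d,k)$ yields $M = \binom{n+d-1}{d} - C(n,d,k) + 1 = N(n,d,k)$. The ideal $J$ now satisfies the hypotheses of Proposition \ref{NHF}: it contains $\{x_1^d,\dots,x_n^d\}$ together with $N(n,d,k)-n$ further distinct monomials of $\Kx_d\setminus\{x_1^d,\dots,x_n^d\}$. The proposition therefore gives $J_k = \Kx_k$, which is precisely the conclusion $\HF_k(J)=0$.

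For the optimality statement, I would reuse the extremal example already constructed in the second half of Proposition \ref{NHF}: take $J$ generated by $\{x_1^d,\dots,x_n^d\}$ together with all monomials of degree $d$ that do \emph{not} divide $m(n,d,k)$. This yields exactly $N(n,d,k)-1$ monomial generators in degree $d$, so $\HF_d(J) = \binom{n+d-1}{d} - (N(n,d,k)-1) = C(n,d,k)$; and since $m(n,d,k)\notin J$ by construction, $\HF_k(J)\ge 1 > 0$.

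There is no real obstacle here — the corollary is essentially a change of bookkeeping from counting generators to computing codimension. The only point requiring a moment's care is the identity $\dim J_d = M$, which relies on $J$ being generated in the single degree $d$ (so that no nontrivial relations among the generators can appear in that degree and spoil the count).
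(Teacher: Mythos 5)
Your proof is correct and follows essentially the same route as the paper: translate the Hilbert function hypothesis $\HF_d(J)=C(n,d,k)-1$ into the statement that $J$ has exactly $N(n,d,k)$ monomial generators in degree $d$, then invoke Proposition \ref{NHF} for both the implication and the optimality example. The extra care you take in justifying $\dim J_d = M$ (linear independence of distinct monomials, all generators in a single degree) is sound and makes explicit what the paper's one-line reformulation leaves implicit.
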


\begin{proofels} This is just a reformulation of Proposition \ref{NHF}, since by definition:
\begin{align*}
\dim_K(J_d)=N(n,d,k) \iff \HF_d(J)&=\binom{n+d-1}{d}-N(n,d,k) \\
&= C(n,d,k)-1,
\end{align*}
and
\[J_k=\Kx_k \iff \HF_k(J)=0.\]

In the same way, we see that $C(n,d,k)-1$ is optimal.
\end{proofels}

Now we extend Proposition \ref{NHF} and Corollary \ref{NHF2} to homogeneous leading powers ideals, that is, ideals generated by polynomials in $\Kx_d$, not necessarily monomials, and such that $\{x_1^d,\dots,x_n^d\} \subset \Lt(I)$. This condition is more general than the condition $\{x_1^d,\dots,x_n^d\} \subset I$, and it implies that $I$ contains a sequence of parameters (the sequence of polynomials corresponding to those leading terms). However, for an ideal $I$ generated by polynomials in $\Kx_d$, the assumption that $I$ contains a sequence of parameters does not in general imply that $I$ is a leading powers ideal.

\begin{theorem}\label{NHF3} Set $n\ge 1$, $d\ge 2$, $d\le k \le n(d-1)$, and set $N=N(n,d,k)$ as in \eqref{Nndk}. Let $I$ be a homogeneous leading powers ideal in $\Kx$ generated by $N$ linearly independent polynomials in $\Kx_d$. Then $I_k=\Kx_k$.

Moreover, the number $N$ is optimal. This is, there exists a homogeneous leading powers ideal $I  \subset \Kx$ generated by $N-1$ linearly independent polynomials in $\Kx_d$ such that $I_k\subsetneq \Kx_k$.
\end{theorem}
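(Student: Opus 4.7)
The plan is to reduce the general leading powers case to the monomial case already handled in Proposition \ref{NHF} by passing to the initial ideal $\Lt(I)$.

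\textbf{Step 1 (Passage to leading terms).} Write $I = \langle f_1, \dots, f_N\rangle$ with $f_i \in \Kx_d$ linearly independent. Because $I$ is generated in a single degree, the homogeneous component $I_d$ equals $\mathrm{span}_K(f_1,\dots,f_N)$, so $\dim_K I_d = N$. The standard Gröbner-basis fact that, for any finite-dimensional $K$-subspace $V \subset \Kx$, the set of leading monomials of a reduced echelon basis of $V$ (with respect to the fixed monomial order) is a basis of the $K$-span of $\{\lt(v) : 0 \neq v \in V\}$, gives $\dim_K (\Lt(I)_d) = N$. Since $I$ is a leading powers ideal, $x_1^d, \dots, x_n^d \in \Lt(I)_d$, so this basis can be chosen of the form $\{x_1^d,\dots,x_n^d,m_{n+1},\dots,m_N\}$ for some monomials $m_i \in \Kx_d$.

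\textbf{Step 2 (Monomial subideal and application of Proposition \ref{NHF}).} Form the monomial ideal
\[
J = \langle x_1^d,\dots,x_n^d, m_{n+1},\dots,m_N\rangle \subset \Lt(I).
\]
It contains the sequence of parameters $x_1^d,\dots,x_n^d$ and has exactly $N = N(n,d,k)$ monomial generators in degree $d$. Proposition \ref{NHF} then yields $J_k = \Kx_k$, and since $J_k \subset \Lt(I)_k \subset \Kx_k$, we get $\Lt(I)_k = \Kx_k$. Finally, the equality of Hilbert functions $\HF_k(I) = \HF_k(\Lt(I))$ forces $I_k = \Kx_k$, which is the claim.

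\textbf{Step 3 (Optimality).} The construction exhibited in the second part of Proposition \ref{NHF} produces a monomial ideal $I_0 = \langle x_1^d,\dots,x_n^d, m_{n+1},\dots, m_{N-1}\rangle$ with $(I_0)_k \subsetneq \Kx_k$. Because $I_0$ is itself a monomial ideal, one has $\Lt(I_0) = I_0$, so $I_0$ is trivially a homogeneous leading powers ideal generated by $N-1$ linearly independent polynomials (monomials) in $\Kx_d$. This gives the required example witnessing optimality.

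The only real content beyond quoting Proposition \ref{NHF} is the transition in Step 1: one must be careful that the number $N$ of linearly independent generators of $I$ in degree $d$ equals the number of monomials in a basis of $\Lt(I)_d$, and that this basis can be chosen to include the pure powers $x_i^d$. This is where the hypothesis $\{x_1^d,\dots,x_n^d\} \subset \Lt(I)$ (and not merely $\{x_1^d,\dots,x_n^d\} \subset I$, or the weaker assumption that $I$ contains some sequence of parameters) is used essentially.
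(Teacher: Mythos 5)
Your proposal is correct and follows essentially the same strategy as the paper: Gaussian elimination to obtain generators with distinct leading monomials including the pure powers $x_i^d$, then Proposition \ref{NHF} applied to the monomial ideal $J$, then the monomial example from that proposition for optimality. The one difference is in transporting $J_k = \Kx_k$ back to $I_k = \Kx_k$: the paper runs an explicit multivariate division with strictly decreasing leading monomials, while you invoke the general Macaulay fact $\HF_k(I) = \HF_k(\Lt(I))$ — both are valid, with the paper's being more self-contained and yours slightly more compact.
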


\begin{proofels} We write $I=\langle p_1,\dots,p_N\rangle$. By Gaussian elimination, we can assume that the leading monomials $m_i$ of the polynomials $p_i$ are different, and by hypothesis we can assume that $m_i =x_i^d$, for all $1\le i\le n$. Let $J = \langle m_1, \dots, m_N \rangle$. By Proposition \ref{NHF}, $J_k=\Kx_k$. In particular, each monomial in $\Kx_k$ is divisible by some monomial $m_i$, with $1 \le i \le N$.

Take now a polynomial $q\in \Kx_k$. Suppose that the leading monomial of $q$ is divisible by a monomial $m_{i_0}$. The remainder in the division of $q$ by $p_{i_0}$ is a polynomial $r$ in $ \Kx_k$ with leading monomial smaller than the leading monomial of $q$. We can repeat this process starting with $r$. We obtain in this way a sequence of polynomials in $ \Kx_k$, with strictly decreasing leading monomials, which are the remainders of the successive divisions. This process ends necessarily in a division with remainder $0$ because $J_k=\Kx_k$, and this shows that $q\in I$.

The same example as in the proof of Proposition \ref{NHF} works here to show that $N$ is optimal.
\end{proofels}

As for Proposition \ref{NHF}, we can restate Theorem \ref{NHF3} in terms of the Hilbert function of a leading powers ideal.
%We also have the following corollary, that can be proved in an analogous way as Corollary \ref{NHF2}.

\begin{corollary}\label{NHF4} Set $n\ge 2$, $d\ge 2$, $d\le k \le n(d-1)$, and consider $C(n,d,k)$, defined in \eqref{Cndk}. Let $I$ be a homogeneous leading powers ideal in $\Kx$ generated by polynomials of degree $d$. If $\HF_d(I)=C(n,d,k)-1$, then $\HF_k(I)=0.$
\end{corollary}

Moreover, the number $C(n,d,k)-1$ is optimal. This is, there exists a homogeneous leading powers ideal $I$ in $\Kx$, generated by polynomials of degree $d$, satisfying $\HF_d(I)=C(n,d,k)$ and $\HF_k(I)>0.$

\begin{remark}
\label{remark:2d}
When $k = 2d$ as in the previous section,
$$
m(n,d,2d) = \begin{cases}
x_1 x_2 x_3 x_4 & \text{ if  $d = 2$ and $n \ge 4$}, \\
x_1^{d-1} x_2^{d-1} x_3^2 & \text{ if  $d \ge 3$}.
\end{cases}
$$
Counting the number of divisors of degree $d$ or applying Lemma \ref{lemma:c} we obtain $C(n,d,2d) = \binom{4}{2} = 6$ for $d = 2$ and $C(n,d,2d) = (d-1) + d + (d-1) = 3d-2$ for $d \ge 3$. We obtain that $C(n,d,2d)$ is equal to the conjectured values of the Hankel index $\Hankel(n,d)$ detailed in Section \ref{section:Hankel}.
\end{remark}

%We show now some examples to illustrate the above results. In some cases the numbers $C(n,d,k)$ and $N(n,d,k)$ can be computed by a simple and natural calculation, rather than using formulas \eqref{Cndk} or \eqref{Nndk}, recalling the fact that $N(n,d,k)-1$ is the number of monomials of degree $d$ not dividing $m(n,d,k)$.

\begin{example}\label{N436} Take $K[x,y,z,w]$, so $n=4$, and let $d=3$ and $k=6$. In this case we have $q=3$ and $r=0$, so $m(4,3,6)=x^2 y^2 z^2$. The set of monomials in $K[x,y,z,w]_3$ not dividing $m(4,3,6)$ is equal to $C_0\cup C_1$, where $C_0=\{x^3,y^3,z^3\}$, and $C_1$ is the set of all monomials of the form $m\cdot w$, where $m$ is any monomial of degree 2. We have then:
\begin{equation}\label{N436eq}
N(4,3,6)-1=|C_0| + |C_1|= 3+\binom{5}{2}=13,\end{equation}
and thus
\[C(4,3,6)=\binom{6}{3}-N(4,3,6)+1=7.\]

This means that, for a homogeneous leading powers ideal $I$ generated by polynomials in $K[x,y,z,w]_3$: if $I$ is generated by $14$ linearly independent polynomials of degree $3$, by Theorem \ref{NHF3}, necessarily $I_6=K[x,y,z,w]_6$.
In other terms: if $\HF_3(I)=C(4,3,6)-1=6$, by Corollary \ref{NHF4}, we have $\HF_6(I)=0$.
%\begin{itemize}[leftmargin=*]
%\item If $I$ is generated by $14$ linearly independent polynomials of degree $3$, by Theorem \ref{NHF3}, necessarily $I_6=K[x,y,z,w]_6$.
%\end{itemize}
\end{example}

\subsection{The Eisenbud-Green-Harris conjecture}
\label{subsection:EGH}

In Corollary \ref{NHF4}, we showed that if $I$ is a homogeneous leading powers ideal in $\Kx$ generated by polynomials of degree $d$, and $\HF_d(I)=C(n,d,k)-1$, then $\HF_k(I)=0$. In this section we show that if we assume a conjecture by Eisenbud, Green and Harris \cite{EGH93}, we can obtain the same bound for any ideal $I$ that contains a sequence of parameters, extending the results from Section \ref{section:Hankel} to other terms of the Hilbert function of $I$.
We recall first this conjecture.

%Proposition \ref{prop:3644} is the crucial property G.Blekherman  uses to prove Theorems \ref{blek36} and \ref{blek44}. We will prove now that if we assume the Eisenbud-Green-Harris conjecture we can extend Proposition \ref{prop:3644} to arbitrary values of $(n,2d)$. We recall here this conjecture.

\begin{conjecture}[EGH] \label{conj:EGH} Let $I$ be a homogeneous ideal in $\R[x_1,\dots,x_n]$ containing a sequence of parameters $p_1, \dots, p_n$ of degrees $\deg(p_i) = a_i$. Then there exists an ideal $J$ containing $\{x_i^{a_i}: 1 \le i \le n\}$ such that $I$ has the same Hilbert function as $J$.
\end{conjecture}

If we assume this conjecture to be true, we can study general homogeneous ideals containing a sequence of parameters by restricting to ideals containing $\{x_i^{a_i}: 1 \le i \le n\}$, where the same results can be easier to prove. The conjecture can be seen as a generalization of Theorem \ref{thm:gorenstein} (Cayley-Bacharach theorem). We are interested in the case $a_i=d$, for all $1\le i\le n$, as in Section \ref{subsection:leadingpowers}, so it will be sufficient to consider the following special case of the EGH conjecture.

\begin{notation}
We say that $\EGH(n,d)$ is true if for any homogeneous ideal $I$ generated by polynomials in $H_{n,d}$ containing a sequence of parameters in degree $d$, there exists an ideal $J$ containing $\{x_1^d, \dots, x_n^d\}$ with the same Hilbert function as $I$.
\end{notation}

Note that in our notation $\EGH(n,d)$ does not cover all cases of the EGH conjecture, since we are assuming that all the polynomials in the system of generators have the same degree.

Now we use the EGH conjecture to generalize the Ottaviani-Paoletti conjecture.

\begin{proposition}
\label{prop:nd}
For $n\ge 2$, $d\ge 2$, $d\le k \le n(d-1)$, let $I \subset K[x_1, \dots, x_n]$ be an ideal generated by a set of $ C(n,d,k) - 1$ linearly independent homogeneous polynomials of degree $d$ containing a sequence of parameters.
If $\EGH(n,d)$ is true, then $\HF_k(I) = 0$.
\end{proposition}

\begin{proofels}
By $\EGH(n,d)$, there exists an ideal $J$ containing $\{x_1^d, \dots, x_n^d\}$ with the same Hilbert function as $I$.
The result now follows from Corollary \ref{NHF4}.
\end{proofels}

%\improvement{Tal vez hay que arreglar esto para que quede mas claro que cosas estamos robando de blekherman.}

By definition, $N(n,d,k) = \dim(H_{n,d}) - C(n,d,k)+1$ and we have seen in Remark \ref{remark:2d} that for $k = 2d$, $C(n,d,2d)$ equals the conjectured value of the Hankel index. Together with Theorem \ref{thm:hankel}, as a corollary of Proposition \ref{prop:nd}, we reobtain the bounds for the number of polynomials in a SOS decomposition stated in Section 2.

\begin{theorem}\label{Nsquares}
Let $f \in \partial \Sigma_{n,2d}^+$. Then, assuming $\EGH(n,d)$ is true, any decomposition of $f$ as a sum of squares of linearly independent polynomials contains at most $N(n,d,2d) - 1$ polynomials.
\end{theorem}

%\begin{proof}
%Take an extreme ray $\ell$ of $\Sigma_{n,2d}^*$ such that $\ell(f) = 0$. If $f = p_1^2 + \dots + p_s^2$, then $0 = \ell(f) = \ell(p_1^2 + \dots + p_s^2) = Q_{\ell}(p_1) + \dots + Q_{\ell}(p_s)$. Since $Q_{\ell}$ is positive semidefinite, $p_i \in \ker(Q_{\ell})$, for $1 \le i \le s$, and hence $s \le N(n,d,2d)-1$ by Proposition \ref{prop:nd}.
%\end{proof}

%We can restate this result in terms of the ranks of the matrices in the Gram spectrahedron.
%
%\begin{theorem}\label{spectrah}
%Let $f \in \partial \Sigma_{n,2d}$, $f > 0$. Then, assuming $\EGH(n,d)$ is true, any matrix in the Gram spectrahedron of $f$ has rank at most $N(n,d,2d) - 1$.
%\end{theorem}
%
%\begin{proofels}
%Let $Q$ be a matrix in the Gram spectrahedron of $f$, and write $Q = P^t D P$, the orthogonal diagonalization of $Q$, with $\rank(D) = \rank(Q)$. Now writing $f = (Pv)^t D (Pv)$ we obtain a decomposition of $f$ as sum of squares of $\rank(Q)$ linearly independent polynomials. Hence, by Theorem \ref{Nsquares}, $\rank(Q) \le N(n,d,2d) - 1$.
%\end{proofels}

\subsection{The Hilbert function and \texorpdfstring{$N(n, d, k)$}{N(n, d, k)}}
We show now some examples to illustrate the above results and interpret the numbers $C(n,d,k)$ and $N(n,d,k)$.
%In Section \ref{section:46}, we will use the results obtained in these examples to compute for applications of these examples.

\begin{example}\label{N436bis} As in Example \ref{N436}, consider $K[x,y,z,w]$, so $n=4$, and let $d=3$ and $k=6$. Let $J=\langle x^3,y^3,z^3,w^3\rangle$.
This example will allow us to derive a surprising relation between the number $N(4,3,6)$ and the Hilbert function of $J$, besides the one in Proposition \ref{NHF} that stands in general.
The Hilbert function for $J$ is shown in Table \ref{table:hilbert46}.

\begin{table}[ht]
\begin{center}
\begin{tabular}{c|ccccccccc}
$d$ & 0 & 1 & 2 & 3 & 4 & 5 & 6 & 7 & 8  \\ \hline
$\HF_d(J)$ & 1 & 4 & 10 & 16 & 19 & 16 & 10 & 4 & 1
\end{tabular}
\caption{Hilbert function for $J =  \langle x^3,y^3,z^3,w^3\rangle  \subset \R[x,y,z,w]$ }
\label{table:hilbert46}
\end{center}
\end{table}

The cardinal of the set $C_1$ in Example \ref{N436} is by definition equal to $\HF_2(J)$, since $J$ contains no polynomials in degree smaller than 3. By Theorem \ref{thm:gorenstein} we have $\HF_2(J)=\HF_6(J)$, so we can rewrite equation \eqref{N436eq} in Example \ref{N436} in the following terms:
\begin{equation}\label{HN436}
N(4,3,6)=\binom{5}{2}+4=\HF_6(J)+4.\end{equation}

By the Cayley-Bacharach theorems, we already knew that $\HF_8(J) = 1$ and that if we add one homogenous polynomial $f$ of degree 4 to $J$, then $\HF_8(J + \langle f \rangle) = 0$. This example shows that $\HF_6(J) = 10$ implies that if we add 10 polynomials of degree 4 to $J$, then the resulting ideal $J'$ satisfies $\HF_6(J') = 0$. It also holds that if we add $\HF_7(J) = 4$ polynomials to $J$, the resulting ideal $J''$ satisfies $\HF_7(J'') = 0$. That is, the numbers $N(4,3,s)$ for $6 \le s \le 8$ can be read directly from the Hilbert series of $J$.
\end{example}

The relation found in the last example is a particular case of the following result.

\begin{proposition}
\label{HFvsN} Let $n\ge 1$ and $d\ge 2$.  Let $p_1, \dots, p_n$ be a sequence of parameters in degree $d$ and $J = \langle p_1, \dots, p_n \rangle$. Denote $m_d=\max\{n(d-1)-d,d\}$. Let $k$ be such $m_d < k \le  n(d-1)$. Then $N(n,d,k) = \HF_k(J)+n$.
\end{proposition}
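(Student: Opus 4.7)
The plan is to use the Gorenstein symmetry of the Hilbert function of $J$ to reduce $\HF_k(J)$ to a straightforward count of monomials, and then to verify a short binomial identity.

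First I would invoke Theorem \ref{thm:gorenstein}: since $J$ is generated by a sequence of parameters of degree $d$, its Hilbert function is symmetric, giving $\HF_k(J) = \HF_{k'}(J)$ where $k' := n(d-1)-k$. The hypothesis $k > m_d \ge n(d-1)-d$ together with $k \le n(d-1)$ forces $0 \le k' < d$. Since $J$ is generated in degree $d$, it contains no nonzero elements of degree less than $d$, so $J_{k'} = 0$ and
\[
\HF_{k'}(J) = \dim_K \Kx_{k'} = \binom{n+k'-1}{k'}.
\]

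Next I would verify the identity $N(n,d,k) = \binom{n+k'-1}{k'} + n$. Writing $k = q(d-1)+r$ with $0 \le r < d-1$, the bound $k > n(d-1)-d$ restricts $(q,r)$ to two shapes: either $q = n$ and $r = 0$ (in which case $k = n(d-1)$ and $k' = 0$), or $q = n-1$ with $r \in \{0,\dots,d-2\}$ (in which case $k' = d-1-r$). In the first case, Pascal's rule applied to $\binom{n+d}{d} = \binom{n+d-1}{d} + \binom{n+d-1}{d-1}$ together with the symmetry $\binom{n+d-1}{n} = \binom{n+d-1}{d-1}$ makes the binomial terms in the definition of $N(n,d,k)$ collapse to $n+1 = \binom{n-1}{0} + n$. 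In the second case, the terms $\binom{n+d-1}{d}$ and $\binom{q+d}{d}$ cancel since $q+d = n+d-1$, and the remaining $\binom{n+d-r-2}{n-1}$ equals $\binom{n+d-r-2}{d-r-1} = \binom{n+k'-1}{k'}$ again by binomial symmetry.

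The main point, rather than a genuine obstacle, will be recognizing that the hypothesis $k > m_d$ simultaneously forces $k' < d$, so that $\HF_{k'}(J)$ has a trivial closed form, and restricts $(q,r)$ to essentially two shapes. Once this observation is made, both halves of the proof reduce to routine bookkeeping with binomial coefficients.
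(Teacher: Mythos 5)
Your proof is correct and follows essentially the same route as the paper's: invoke the Gorenstein symmetry from Theorem~\ref{thm:gorenstein} to replace $\HF_k(J)$ by $\HF_{n(d-1)-k}(J)=\binom{n+k'-1}{k'}$ with $k'<d$, then split on $q=n$ versus $q=n-1$ and check the resulting binomial identities. The only cosmetic difference is notation ($k'$ versus the paper's $l$).
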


\begin{proofels} Set $l=n(d-1)-k$. Since $l+k=n(d-1)$, by Theorem \ref{thm:gorenstein}, we have: $\HF_k(J)=\HF_l(J)$. By definition of $m_d$, we have $l<d$, thus $\HF_l(J)$ is the number of monomials in $\Kx_l$.

Now $N(n,d,k)$ is the number of monomials of degree $d$ not diving $m(n,d,k)$, which under our assumptions is of the form $m(n,d,k) = x_1^{d-1} x_2^{d-1} \dots x_{n-1}^{d-1} x_n^r$, for $r = k - (n-1)(d-1)$.
As in Example \ref{N436}, the set of monomials of degree $d$ not dividing $m(n,d,k)$ is equal to $C_0 \cup C_1$, where $C_0 = \{x_1^d, \dots, x_n^d\}$ and $C_1$ is the set of all monomials of the form $m \cdot x_n^r$, for $m$ any monomial of degree $d-r = l$, hence $|C_0|=n$ and $|C_1| = \HF_l(J)$, which proves the claim.
\end{proofels}

Note that $N(n,d,k) = \HF_k(J)+n$ holds also trivially for $k = d$ by definition of the Hilbert function.

Using Theorem \ref{NHF3}, we deduce from Proposition \ref{HFvsN} the following result.

\begin{theorem}\label{thm:HFvsN}
Let $n \ge 1$, $d\ge 2$, let $p_1, \dots, p_n$ be a sequence of parameters in $\Kx_d$ and let $J =  \langle p_1, \dots, p_n \rangle$.
Let $k > n(d-1) - d$. Assuming the Eisenbud-Green-Harris conjecture, if we add $\HF_k(J)$ linearly independent polynomials of degree $d$ to $J$, the resulting ideal will always contain all $\Kx_k$.
%$\HF_k(J)$ is the minimum number with such property.
%Equivalently, $\HF_k(J)-1$ is an optimal upper bound for the number of linearly independent polynomials we can add to $J$ without obtaining all $\Kx_k$.
\end{theorem}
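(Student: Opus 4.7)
The plan is to directly combine Proposition~\ref{HFvsN} with Theorem~\ref{NHF3}. Setting $m = \HF_k(J)$, I would let $q_1, \dots, q_m \in \Kx_d$ denote the added polynomials, chosen to be linearly independent modulo $J_d$, and put $I = J + \langle q_1, \dots, q_m\rangle$. Because $p_1, \dots, p_n$ is a sequence of parameters (and hence a linearly independent set in $\Kx_d$), the ideal $I$ is generated in degree $d$ by exactly $n + m$ linearly independent polynomials. Moreover, $\Lt(I) \supset \Lt(J) \supset \{x_1^d, \dots, x_n^d\}$, so $I$ inherits the leading-powers property from $J$.

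I would then split into two cases according to the size of $k$. In the main range $d \le k \le n(d-1)$, the hypothesis $k > n(d-1) - d$ together with $k \ge d$ implies either $k > m_d$, where Proposition~\ref{HFvsN} gives $N(n,d,k) = \HF_k(J) + n$, or $k = d$, where the same identity holds trivially as noted in the remark following Proposition~\ref{HFvsN}. Thus $I$ is a homogeneous leading powers ideal generated by $N(n,d,k) = n+m$ linearly independent polynomials of degree $d$, and Theorem~\ref{NHF3} immediately forces $I_k = \Kx_k$. In the residual range $k > n(d-1)$, the proof of Lemma~\ref{sqrtI} already establishes $\Kx_{n(d-1)+1} \subset J$, and since $J$ is an ideal we obtain $\Kx_k \subset J \subset I$ for every $k \ge n(d-1)+1$, at which point no added polynomials are even necessary (indeed $\HF_k(J) = 0$ there).

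The main subtlety, and essentially the only one, is the bookkeeping needed to check that the hypothesis $k > n(d-1) - d$ dovetails correctly with the range $m_d < k \le n(d-1)$ of validity for Proposition~\ref{HFvsN}, with the boundary value $k = d$ absorbed by the subsequent remark and the tail $k > n(d-1)$ handled separately via Lemma~\ref{sqrtI}. No new ingredients beyond those developed in this section are needed; the proof is a direct reduction.
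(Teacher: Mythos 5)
Your proposal is correct and follows exactly the route the paper indicates (the paper states the theorem is deduced by combining Proposition~\ref{HFvsN} with Theorem~\ref{NHF3}, without writing out the details). Your careful bookkeeping of the range for $k$, the observation that the added polynomials must be taken linearly independent modulo $J_d$ for the count $n+m = N(n,d,k)$ to match the hypothesis of Theorem~\ref{NHF3}, and the dispatch of the tail $k > n(d-1)$ via Lemma~\ref{sqrtI} are all in order.
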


Theorem \ref{thm:HFvsN} generalizes Theorem \ref{thm:gorenstein} in the sense of how many polynomials we need to add to the ideal $J$ in order to obtain the value 0 in a specific term of the Hilbert function. See also Example \ref{hilbert8}.

\section{Sharp bounds for the maximum number of polynomials in a SOS decomposition}
\label{section:sharpbounds}

We have seen that, if Ottaviani-Paoletti conjecture or the Eisenbud-Green-Harris conjecture are true, the maximum number of polynomials that can appear in a sum of squares decomposition of a polynomial on the positive boundary of $\Sigma_{n,2d}$ is
$$
\begin{cases}
\dim(H_{n,2d}) - 6 & \text{ for } d = 2 \text{ and } n \ge 4, \\
\dim(H_{n,2d}) - (3d - 2) & \text{ for } d \ge 3 \text{ and } n \ge 3. \\
\end{cases}
$$
By \cite{blekherman}, these bounds are sharp in the cases $(n,d) = (4,2)$ and $(n,d) = (3,3)$.
In this section we establish the sharpness of these bounds for general $d$ and $n$.

We first show that if the bound is sharp for a given $d \ge 2$ and some $n_0 \in \N$, then it is sharp for the same $d$ and any $n \ge n_0$. This is a direct consequence of \cite[Theorem 21]{BSV}.
Recall that the dual cone $K^*$ of a convex cone $K$  in a real vector space $V$ is the set of all linear functionals in the dual space $V^*$ that are nonnegative on $K$: $K^*=\{\ell\in V^*\,:\,\ell(x)\ge 0,\, \forall x\in K\}$. An element $v \in K \subset V$ is in the boundary of $K$ if and only if there exists $\ell \in K^*$ such that $\ell(v) = 0$.

\begin{proposition}
\label{prop:sharp}
For $d \ge 2$ and $n_0 \in \N$, let $f = p_1^2 + \dots + p_s^2$ be a strictly positive polynomial in the boundary of $\Sigma_{n_0,2d}$. For $n > n_0$, let $m_1, \dots, m_r$ be all the monomials in $H_{n,d} \smallsetminus H_{n_0,d}$, that is, all the monomials divisible by some $x_i$, $n_0 + 1 \le i \le n$. Then $g = f + m_1^2 + \dots + m_r^2$ is a strictly positive polynomial in the boundary of $\Sigma_{n,2d}$. If $s = \dim H_{n_0, d} - k$ then $s+ r = \dim H_{n,d} - k$.
\end{proposition}

\begin{proofels}
It is clear that $g$ is a strictly positive polynomial, since $f$ is strictly positive.
To prove that $g \in \partial\Sigma_{n,2d}$, take $\ell \in \Sigma_{n_0,2d}^*$ such that $\ell(f) = 0$. A form $\ell$ with this property always exists since $f \in \partial\Sigma_{n,2d}$. Extend $\ell: H_{n_0, 2d} \rightarrow \R$ to $\tilde \ell: H_{n, 2d}\rightarrow \R$ by setting $\tilde \ell(m(x_1, \dots, x_n)) = \ell(m(x_1, \dots, x_{n_0}, 0, \dots, 0))$ for $m \in H_{n,2d}$. Then $\tilde \ell \in \Sigma_{n,2d}^*$ (since $\ell \in \Sigma_{n_0,2d}^*$) and $\tilde \ell(g) = 0$, hence $g \in \partial\Sigma_{n,2d}$.
\end{proofels}

We obtain the following immediate corollary.

\begin{corollary}
\label{coro:n0n}
Let $d \ge 2$ and $n_0 \in \N$. If $f \in H_{n_0, 2d}$ is a strictly positive polynomial in the boundary of $\Sigma_{n_0,2d}$ and $f$ is a sum of squares of $s = \dim H_{n_0, d} - k$ linearly independent polynomials, for some $k \ge 0$, then for any $n > n_0$ there exists a strictly positive polynomial $g$ in the boundary of $\Sigma_{n,2d}$ that is the sum of squares of $\dim H_{n,d} - k$ linearly independent polynomials.
\end{corollary}

Based on these results and known examples, we get the following sharp bounds.

\subsection{Sharp bounds for \texorpdfstring{$d = 2$}{d = 2}}

\begin{corollary}
\label{coro:d2}
  The maximum number of linearly independent polynomials in a SOS decomposition of a strictly positive polynomial in the boundary of $\Sigma_{n,4}$, $n \ge 4$, is $\dim(H_{n,2}) - 6$ and this bound is sharp for all $n \ge 4$.
\end{corollary}
\begin{proofels}
For $n = 4$, the dimension of $H_{4,2}$ is 10. G. Blekherman \cite{blekherman} provides a direct way to construct examples of strictly positive polynomials in $\partial \Sigma_{4,4}$ which are the sum of squares of 4 linearly independent polynomials (see also \cite{capco} for a concrete example), hence the bound is sharp in this case. By Proposition \ref{prop:sharp}, it is then sharp for $d = 2$ and any $n \ge 4$.
\end{proofels}

\begin{remark}
In \cite{scheiderer2017}, C. Scheiderer uses that every strictly positive polynomial in the boundary of $\Sigma_{4,4}$ is the sum of 4 squares to prove that the Pythagoras number of $H_{4,4}$ is 5 (that is, every SOS polynomial in $H_{4,4}$ can be decomposed as the sum of 5 squares). For $d = 2$ and $n=5$, $\dim(H_{5,2}) = 15$, hence by Corollary \ref{coro:d2} any strictly positive polynomial in the boundary of $\Sigma_{5,4}$ is the sum of at most $15 - 6 = 9$ polynomials. This suggests, following the ideas in \cite{scheiderer}, that the Pythagoras number of $H_{5,4}$ is upper bounded by 10, which is smaller than the known upper bound 11 mentioned in that paper.
\end{remark}

\subsection{Sharp bounds for \texorpdfstring{$d = 3$}{d = 3}}

\begin{corollary}
\label{coro:d3}
  The maximum number of linearly independent polynomials in a SOS decomposition of a strictly positive polynomial in the boundary of $\Sigma_{n,6}$, $n \ge 3$, is $\dim(H_{n,2}) - 7$ and this bound is sharp for all $n \ge 3$.
\end{corollary}
\begin{proofels}
For $n = 3$, the dimension of $H_{3,3}$ is 10. G. Blekherman \cite{blekherman} provides a direct way to construct examples of strictly positive polynomials in $\partial \Sigma_{3,6}$ which is the sum of squares of 3 linearly independent polynomials, hence the bound is sharp in this case. By Proposition \ref{prop:sharp}, it is then sharp for all $n \ge 3$.
\end{proofels}

We next show how to construct examples satisfying the bound for $n=3$  and any $d \ge 4$, proving the sharpness of the bound for any $(n,d)$, $n \ge 3$, $d \ge 4$.

\subsection{Sharp bounds for \texorpdfstring{$d \ge 4$}{d >= 4}}
\label{subsection:d4plus}

We use \cite[Theorem 21]{BSV} restated in our setting:

\begin{theorem}
\label{teo:gamma}
Let $W \subset H_{n,d}$ be a set of $\dim(H_{n,d}) - p - 1$ linearly independent homogeneous polynomials such that $\langle W \rangle$ is a zero-dimensional ideal. If $\Gamma = V(W)$ consist of $p+2$ linearly dependent real points (that is, there exist coefficients $\{u_\gamma\}_{\gamma \in \Gamma}$, not all zero, such that $\sum_{\gamma \in \Gamma} u_\gamma q(\gamma) = 0$ for all homogeneous polynomials $q \in H_{n,d}$), then the Hankel index $\Hankel(n,d) \le p$.
\end{theorem}

From the proof of \cite[Theorem 21]{BSV} given by the authors, one can obtain a refinement of the latter result, which provides an easier way to construct examples.

\begin{theorem}
\label{teo:gammaprime}
Let $W \subset H_{n,d}$ be as before and let $\Gamma = V(W)$. If there exists a minimal linearly dependent subset $\Gamma' \subset \Gamma$ of cardinal $k+2$, then $\Hankel(n,d) \le k$.
%(that is, the points in $\Gamma$ satisfy a unique up to scaling relation $\sum_{v \in \Gamma'} u_v p(v) = 0$).

\end{theorem}

In our setting, if we know the coefficients of the linear dependence relation, we can deduce from \cite[Theorem 6.1]{blekherman} and \cite[Theorem 21]{BSV} a formula to construct explicit examples.

\begin{theorem}
\label{thm:formula}
Let $\Gamma' = \{v_1, \dots, v_{k+2}\}$ be as before, and let
$$
\sum_{v \in \Gamma'} u_v q(v) = 0
$$
be the unique (up to scaling) relation. Define
$$
Q(h) = a_1 h(v_1)^2 + \dots + a_{k+2} h(v_{k+2})^2, \quad \text{ for } h \in H_{n,d},
$$
with a single negative coefficient $a_j$, the rest $a_i$ positive, and $\sum_{i=1}^{k+2} \frac{u_i^2}{a_i} = 0$.
Then $Q$ has kernel of rank $M = \dim(H_{n,d}) - k$. If $\ker(Q) = \langle q_1, \dots, q_{M} \rangle$, then
$f = q_1^2 + \dots + q_{M}^2$ is a strictly positive polynomial in the boundary of $\Sigma_{n,2d}$.
\end{theorem}

\subsubsection{Sharp bounds for \texorpdfstring{$d = 4$}{d = 4}}
\label{subsection:d4}

We will start by constructing an example for the specific case of $d = 4$. Afterwards, we will demonstrate that this example can be generalized for any $d \ge 4$.

By Section \ref{section:Hankel}, the maximum number of polynomials in a SOS decomposition of a polynomial of degree $2d = 8$ is $\dim H_{n,4} -10$. By Corollary \ref{coro:n0n}, if the bound is attained for $n = 3$, then it is attained for all $n \ge 3$.

For $n = 3$, the dimension of $H_{3,4}$ is $\binom{6}{2} = 15$ so the maximum number of polynomials that can appear  in a sum of squares decomposition of a polynomial on the positive boundary of $\Sigma_{3,8}$ is $15 - 10 = 5$.

We want to prove $\Hankel(3,4) \le 10$. To use Theorem \ref{teo:gamma} directly,  we should take $p = 10$ in that theorem, and since $\dim(H_{3,4}) = 15$, we need to find 4 linearly independent polynomials of degree 4 intersecting in 12 linearly dependent points. Instead of doing that, following Theorem \ref{teo:gammaprime}, we will look for two polynomials of degree 4 intersecting in a set $\Gamma$ of 16 real points such that there exists a subset of 12 roots $\Gamma' = \{v_1, \dots, v_{12}\}$ satisfying a unique relation of linear dependence for a generic $h \in H_{3,4}$:
\begin{equation} \label{linearRelation}
u_1 h(v_1) + \dots + u_{12} h(v_{12}) = 0,
\end{equation}
for $(u_1, \dots, u_{12}) \in \R^{12}$.

We consider $\Gamma$ the set of common roots of
$$
\begin{aligned}
q_1 &= (x_1+x_0)(x_1)(x_1-x_0)(x_1-2x_0), \\
q_2 &= (x_2)(x_2-x_0)(x_2 - 2x_0)(x_2-3x_0),
\end{aligned}
$$
and let $\Gamma'$ be the subset containing $12$ points in 3 columns of $4$ points:
$$
\Gamma' = \{ (1 : a : b) : a \in \{-1,0,1\}, b \in \{0,1,2,3\}\} \subset \Proj^2
$$

By a direct computation, we can verify that these points satisfy the following relation
$$
\sum_{i = 0}^3 (-1)^i \binom{3}{i} h(1,-1,i) - 2 \sum_{i = 0}^3 (-1)^i \binom{3}{i} h(1,0,i) + \sum_{i = 0}^3 (-1)^i \binom{3}{i} h(1,1,i) = 0,
$$
for any $h \in H_{3,4}$. That is, $u = (1, -3, 3, 1, -2, 6, -6, 2, 1, -3, 3, 1)$ in \eqref{linearRelation} (see Lemma \ref{lemma:bivariate} for the general construction).

%By the proof of \cite[Theorem 4.1]{BSV}, there exist real numbers $a_i$, $1 \le i \le 12$, such that the linear functional $\ell: H_{3,8} \rightarrow \R$,
%$$\ell(f) = a_1 f(v_1) + \dots + a_{12} f(v_{12})
%$$
%is an element of $\Sigma^*_{3,8} \smallsetminus P^*_{3,8}$ with rank equal to 10. We are in the same setting as in \cite[Theorem 6.1]{blekherman} hence we can apply the formula in that theorem to construct $\ell$. According to that formula, the values of $a_i$, $1 \le i \le 12$, must be chosen so that there is a single negative coefficient $a_k$, the rest of the $a_i$ are positive, and $\sum_{i=1}^{12} \frac{u_i^2}{a_i} = 0$.

To construct an explicit example, following Theorem \ref{thm:formula}, we set $a_i = u_i^2$ for $1 \le i \le 11$ and $a_{12} = -\frac{1}{11}$, and define $Q: H_{3,4} \rightarrow \R$, $Q(h) = \sum_{i=1}^{12} a_i h(v_i)^2$. This form must have kernel of dimension 5. By a Maple computation (see \cite[Worksheet A]{strictlyPositive}) we obtain that the kernel is generated by
$$
\begin{aligned}
w_1 &= q_2 = (x_2)(x_2-x_0)(x_2 - 2x_0)(x_2-3x_0), \\
w_2 &= x_0(x_1^3-x_1x_0^2),\\
w_3 &= x_1(x_1^3-x_1x_0^2), \\
w_4 &= x_2(x_1^3-x_1x_0^2), \\
w_5 &= -x_0^4+x_0^3x_2+3x_0^2x_1^2+12x_0^2x_1x_2+3x_0^2x_2^2-15x_0x_1^2x_2-18x_0x_1x_2^2- \\
& - 2x_0x_2^3+9x_1^2x_2^2+6x_1x_2^3,
\end{aligned}
$$
hence $f = w_1^2 + \dots + w_5^2$ is a strictly positive polynomial in the boundary of $\Sigma_{3,8}$ (which can also be verified numerically using SEDUMI).

\begin{remark}
We observe that $\{w_1, w_2, w_3, w_4\}$ is a set of 4 linearly independent polynomials intersecting in 12 points, which satisfy the hypothesis of \cite[Theorem 21]{BSV}.
\end{remark}

Since the Ottaviani-Paoletti conjecture is known to be true for the case $d=4$, the example gives a complete proof of the following result.

\begin{proposition}
For $d = 2$ and $n \ge 4$ the maximum number of polynomials that can appear in a SOS decomposition of a strictly positive polynomial in the boundary of $\Sigma_{n,4}$ is $\dim H_{n,4} - 6$. For $d = 3$ or $d = 4$ and $n \ge 3$, the maximum number of polynomials that can appear in a SOS decomposition of a strictly positive polynomial in the boundary of $\Sigma_{n,2d}$ is $\dim H_{n,2d} - 6$. All these bounds are sharp.
\end{proposition}

\subsubsection{Sharp bounds for \texorpdfstring{$d \ge 5$}{d >= 5}}

We can generalize the example in the previous section to any $d \ge 4$ in the following way.

We take $\Gamma$ the set of common roots of
\begin{align*}
q_1 &= (x_1+x_0)(x_1)(x_1-x_0)\cdots (x_1-(d-2)x_0), \\
q_2 &= (x_2)(x_2-x_0)(x_2 - 2x_0) \cdots (x_2-(d-1)x_0)
\end{align*}
and let $\Gamma'$ be a subset  containing $3d$ points in 3 columns of $d$ points:
$$
\Gamma' = \{ (1 : a : b) : a \in \{-1,0,1\}, b \in \{0,1,\dots,d-1\}\} \subset \Proj^2.
$$

We use the following lemma to deduce a general relation among the points.

\begin{lemma}
Let $d \in \N$, then
$$
\phi_{d,a}(x) = \sum_{k = 0}^d (-1)^k \binom{d}{k} (x+k)^a = 0
$$
for all $a = 0, 1, \dots, d-1$ and all $x \in \R$.
\end{lemma}

\begin{proofels}
The formula corresponds to the $d$th order forward difference $\Delta^n[f](x)$ applied to $f(x) = x^a$, and this function vanishes on all polynomials of degree smaller than or equal to $d - 1$ (see for example \cite[Figure 23.1]{ChapraCanale}).
\end{proofels}

We can now establish the relation between the evaluations at the points of $\Gamma'$.

\begin{lemma}
\label{lemma:bivariate}
Let $m \in H_{3,d}$ be a monomial, $m = x_0^a x_1^b x_2^c$, with $a,b,c \in \Z_{\ge 0}$ and $a+b+c = d$. Define
$$\Phi_{\alpha}(m) := \sum_{k = 0}^{d-1} (-1)^k \binom{d-1}{i} m(1,\alpha,k).$$
Then
\begin{equation}\label{uniqueRelation}
\Phi_{-1}(m) - 2 \Phi_0(m) + \Phi_1(m) = 0.
\end{equation}
\end{lemma}
\begin{proofels}
If $c < d-1$, the previous lemma implies that $\Phi_{\alpha}(m) = 0$ for all $\alpha \in \R$, and the identity follows. For $c \ge d-1$, it remains to verify the identity for the monomials
$$
x_2^d, \quad x_0 x_2^{d-1} \quad \text { and } \quad x_1 x_2^{d-1},
$$
which can be verified by a direct computation.
\end{proofels}

\begin{remark}
Formula \ref{uniqueRelation} is actually a bivariate forward difference of order $2$ in $x_1$ and order $d-1$ in $x_2$, hence it vanishes on all monomials $m(x_0,x_1,x_2)$ of degree $< 2$ in $x_1$ or degree $< d-1$ in $x_2$, as required.
\end{remark}

\begin{corollary}
\label{coro:relation}
Let $\Gamma = \{ (1 : a : b) : a \in \{-1,0,1\}, b \in \{0,1,\dots, d-1\}\} \subset \Proj^2$  and let
$$
u_{(1:a:b)} = \begin{cases}
(-1)^b \binom{d-1}{b} & \text{ if }a = -1 \text{ or } a = 1, \\
(-2) (-1)^b \binom{d-1}{b} & \text{ if }a = 0.
\end{cases}.$$
Then $\sum_{v \in \Gamma} u_v h(v) = 0$ for all homogeneous polynomials $h \in \R[x_0, x_1, x_2]$ of degree $d$.
\end{corollary}

We again follow formula \cite[Theorem 6.1]{blekherman} to construct examples.
\begin{lemma}
\label{lemma:d=3}
Let $\Gamma \subset \Proj^2$ and $u_v$, $v \in \Gamma$, be as in Corollary \ref{coro:relation}. Define the quadratic form $Q: H_{3,d} \rightarrow \R$,
$$Q(h) = a_1 h(v_1)^2 + \dots + a_{3d} h(v_{3d})^2$$
with $a_i = u_i^2$ for $1 \le i \le 3d-1$ and $a_{3d} = -\frac{1}{3d-1}$. Then $Q$ has kernel of dimension $M = \dim(H_{n,d}) - (3d - 2)$. If the kernel is generated by $\{w_1, \dots, w_M\}$, then $f = w_1^2 + \dots + w_M^2$ is a strictly positive polynomial in the boundary of $\Sigma_{3,2d}$.
\end{lemma}

\begin{example}
Applying this strategy to the case $d = 5$ and $\Gamma = \{ (1 : a : b) : a \in \{-1,0,1\}, b \in \{-2, -1, 0, 1, 2\}\} \subset \Proj^2$, we obtain a form $Q$ with kernel of dimension 8, generated by
$$
\begin{aligned}
w_1 &= 4 x_0^4 x_2-5 x_0^2 x_2^3+x_2^5, \\
w_2 &= \frac{4}{15} x_0^5-\frac{4}{5} x_0^3 x_1^2+2 x_0^3 x_1 x_2-x_0^3 x_2^2+2 x_0^2 x_1^2 x_2-x_0^2 x_1 x_2^2+2 x_0 x_1^2 x_2^2- \\
& - 2 x_0 x_1 x_2^3+\frac{1}{3} x_0 x_2^4-2 x_1^2 x_2^3+x_1 x_2^4, \\
w_3 &= -x_0^2 x_1 x_2^2+x_1^3 x_2^2, \\
w_4 &= -x_0^2 x_1^2 x_2+x_1^4 x_2, \\
w_5 &= -x_0^4 x_1+x_1^5, \\
w_6 &= -x_0^3 x_1 x_2+x_0 x_1^3 x_2, \\
w_7 &= -x_0^3 x_1^2+x_0 x_1^4, \\
w_8 &= -x_0^4 x_1+x_0^2 x_1^3,
\end{aligned}
$$
hence $f = w_1^2 + \dots + w_{8}^2$ is a strictly positive polynomial in the boundary of $\Sigma_{3,10}$ (see \cite[Worksheet B]{strictlyPositive}).
\end{example}

We obtain the following result which proves that the bounds are sharp in all cases.

\begin{proposition}
For $d = 2$ and $n \ge 4$, there exists a strictly positive polynomial in the boundary of $\Sigma_{n,4}$ that is the sum of squares of $\dim H_{n,2} - 6$ polynomials. For $d \ge 3$ and $n \ge 3$, there exists a strictly positive polynomial in the boundary of $\Sigma_{n,2d}$ that is the sum of squares of $\dim H_{n,2d} - (3d-2)$ polynomials.
\end{proposition}

\begin{proofels} We have already proved the cases $d = 2$ and $d = 3$. For $n = 3$ and $d \ge 4$ we use the construction in Lemma \ref{lemma:d=3}, and we extend this example to any $n \ge 3$ using Corollary \ref{coro:n0n}.\end{proofels}

We remark that all the polynomials in the proposition can be effectively computed using a computer algebra system.

\subsubsection{General configuration of points}

In the constructions above we used a specific configuration of points which allowed us to explicitly compute the vector $u$ of coefficients of the unique relation $u_1 h(v_1) + \dots + u_{3d} h(v_{3d}) = 0$. The existence of such relation can be deduced in a more general setting from one of the Cayley-Bacharach theorems which we recall here.

\begin{theorem}[{\cite[Theorem CB4]{eisenbud96}}] \label{thm:CB4} Let $X_1, X_2 \subset \Proj^2$ be plane curves of degrees $d$ and $e$ respectively, meeting in a collection of
$d \cdot e$ distinct points $\Gamma = \{p_1,... ,p_{d \cdot e}\}$.  Any set $\Gamma'$ consisting of all but one point of $\Gamma$ impose independent conditions on
forms of degree $d + e - 3$. If $C \subset \Proj^2$ is any
plane curve of degree $d + e - 3$ containing all but one point of $\Gamma$, then $C$ contains all of $\Gamma$.
\end{theorem}

Our claim is now a direct corollary.

\begin{proposition}
Let $f \in \R[x_0,x_1,x_2]$ be a homogenous polynomial of degree 3 and $g \in \R[x_0,x_1,x_2]$ a homogenous polynomial of degree $d \ge 1$ such that $\Gamma = \V(f) \cap \V(g)$ contains $3d$ real distinct points. There exist a unique (up to scaling) relation
$$
u_1 h(v_1) + \dots + u_{3d} h(v_{3d}) = 0
$$
for all polynomials $h\in \R[x_0,x_1,x_2]$ of degree $d$.
\end{proposition}

\begin{proofels} Take $e = 3$ in Theorem \ref{thm:CB4} and let $h$ be a form of degree $d$. By the second claim of the theorem, the linear forms $h(v_1), \dots, h(v_{3d})$ on the coefficients of $h$ are linearly dependent. If the relation is not unique, we can construct a relation involving $3d-1$ points, which contradicts the first claim of the theorem.
\end{proofels}

Therefore, given a polynomial $f$ of degree $3$ and a polynomial of degree $d$ intersecting in $3d$ real points, we can apply Theorem \ref{thm:formula} to construct a strictly positive polynomial in the boundary of $\Sigma_{n,2d}$ that is the sum of squares of $\dim{H_{n,d}} - (3d-2)$ linearly independent polynomials of degree $d$.

\section{Minimal number of polynomials in any SOS decomposition}

We have studied in the previous section the maximum number of polynomials in a SOS decomposition of a strictly positive polynomial in the boundary of the SOS cone.
We now study a related question. For fixed number of variables $n$ and degree $d$, which is the minimal number $s \in \N$ such that any strictly positive polynomial in the boundary of $\Sigma_{n,2d}$ can be decomposed as the sum of squares of $s$ polynomials?

The sum of squares length of a polynomial $f \in \Sigma_{n,2d}$ is defined as
$$
\ell(f) = \min\{r \ge 0 : \exists p_1, \dots, p_r \in H_{n,d} \text{ with } f = p_1^2 + \dots + p_r^2\}.
$$
Following this definition, our question can be rephrased as finding the maximum SOS length of the polynomials in $\partial\Sigma_{n,2d}^+$.

This question is related to the algebraic description of the boundary. For example, in \cite{blekherman2} the authors use these bounds to get properties of the variety defined as the algebraic closure of the boundary for the cases $(n, 2d) = (4,4)$ and $(3,6)$. In these two cases, every strictly positive polynomial in the boundary of $\Sigma_{n,2d}$ has length $n$.

In the general case, by dimension arguments that we will explain below, one can easily get bounds for the minimum number of polynomials in a decomposition of a polynomial in $\partial\Sigma_{n,2d}$. In particular, one can show that $n$ polynomials are in general not enough, that is, there must be polynomials in $\partial\Sigma_{n,2d}$ of length greater than $n$. One natural question is then if the bounds given by the dimension count are optimal. We will provide examples of polynomials in $\partial\Sigma_{n,2d}^+$ of length larger than the predicted number, giving a negative answer to this question.

For a given polynomial it is not easy to compute its SOS length. Numerical solvers based on interior point methods will usually compute decompositions with large number of polynomials. On the other hand, exact solvers are very expensive computationally and can only solve small problems. A usually good heuristics to find decompositions with small number of polynomials is to minimize the nuclear norm of the SDP problem using a numerical solver (see for example \cite[Section 2.2.6]{BPT}). However there is no easy way to prove that the decompositions found by these methods are indeed minimal.
A case where it is easy to prove minimality is the case where the decomposition is unique, so we look for these type  of examples.

\subsection{Polynomials in \texorpdfstring{$\partial\Sigma_{4,6}^+$}{Sigma(4,6)} with unique SOS decompositions}
\label{subsection:46}

We study in this section the case of strictly positive polynomials in the boundary of $\Sigma_{4,6}$.

\begin{example}[sum of 4 squares with unique decomposition]\label{ex:reznick} The first examples known to us of strictly positive polynomials in $\partial \Sigma_{4,6}$ appeared in \cite{reznick}. The authors provided a construction for strictly positive polynomials of degree 6 and any number of variables that are in the boundary of the corresponding SOS cone. For 4 variables, one of such examples is $f = p_1^2 + p_2^2 + p_3^2 + p_4^2$, with
\begin{align*}
p_1 &= x((2-1/2)x^2-(y^2+z^2+w^2)), \\
p_2 &= y((2-1/2)y^2-(x^2+z^2+w^2)), \\
p_3 &= z((2-1/2)z^2-(x^2+y^2+w^2)), \\
p_4 &= w((2-1/2)w^2-(x^2+y^2+z^2)).
\end{align*}

Using SEDUMI we can verify numerically that in this case the provided decomposition is the unique decomposition of $f$ as a sum of squares of polynomials (modulo orthogonal transformations). Equivalently, the Gram spectrahedron of $f$ consists of one point. This is a similar situation as in the case $(n,2d)=(3,6)$, for which it is proven in \cite{capco} that all strictly positive polynomials in the boundary of the SOS cone have a unique SOS decomposition. See \cite[Worksheet C]{strictlyPositive}.
\end{example}

We now provide a rigorous prove that the decomposition in Example \ref{ex:reznick} is indeed unique.
The strategy followed up here was suggested by C. Scheiderer.

\begin{proposition}
\label{prop:unique46}
 Let $p_1, p_2, p_3, p_4$ and $f$ be as in Example \ref{ex:reznick}. The Gram spectrahedron of $f$ consists of a unique point, and hence $f$ allows a unique sum of squares decomposition modulo orthogonal transformations.
\end{proposition}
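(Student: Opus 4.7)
My proof plan has three main steps and combines Koszul-type injectivity with a dual-certificate argument.

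\emph{Step 1.} Verify that $p_1, p_2, p_3, p_4$ is a sequence of parameters by showing their only common complex zero is the origin. Exploiting the $S_4$-symmetry of the system $p_i = 0$, one shows that any nonzero common zero must satisfy $x^2 = y^2 = z^2 = w^2$, and substituting back into $p_1 = 0$ then forces all variables to vanish. Hence $(p_1, p_2, p_3, p_4)$ is a regular sequence in $\R[x,y,z,w]$.

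\emph{Step 2.} Let $W = \spn{p_1, \dots, p_4} \subset H_{4,3}$. By the exactness of the Koszul complex for a regular sequence, all syzygies of $p_1, \dots, p_4$ are generated by the Koszul relations $p_j e_i - p_i e_j$. Restricting the multiplication $W \otimes W \to H_{4,6}$, its kernel is exactly the antisymmetric part $\wedge^2 W$, so the symmetric multiplication
\[
  \phi \colon S^2 W \longrightarrow H_{4,6}, \qquad p_i \cdot p_j \longmapsto p_i p_j,
\]
is injective. Any PSD Gram matrix $Q$ of $f$ with image in $W$ writes as $Q = \sum_{i,j} A_{ij}\, \tilde p_i \tilde p_j^{T}$ for some PSD $4 \times 4$ matrix $A$, and the equation $v^T Q v = f$ becomes $\phi(A) = \phi(I)$, forcing $A = I$. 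So $Q_0$ is the unique PSD Gram matrix of $f$ whose image is contained in $W$.

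\emph{Step 3 (main obstacle).} Show that every PSD Gram matrix of $f$ has image contained in $W$. I would do this by exhibiting a functional $l \in \Sigma_{4,6}^*$ with $l(f) = 0$ and $\ker(Q_l) = W$: given such an $l$, for any $Q$ in the Gram spectrahedron, $0 = l(f) = \operatorname{tr}(Q \cdot Q_l)$ together with $Q, Q_l \succeq 0$ forces $\mathrm{image}(Q) \subseteq \ker(Q_l) = W$, whence $Q = Q_0$ by Step 2. To construct $l$, one can exploit the $S_4$-symmetry of $f$ to restrict to $S_4$-invariant $l$, reducing to a small feasibility problem; alternatively, one rationalizes a numerical dual-optimal solution from the SDP used in the numerical verification and certifies symbolically that the kernel of $Q_l$ is exactly the $4$-dimensional subspace $W$. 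The existence of some $l$ with $l(f) = 0$ and $\ker(Q_l) \supseteq W$ is automatic from $f \in \partial \Sigma_{4,6}$; the delicate point is ensuring that, for a suitable $l$, the kernel is exactly $W$ rather than strictly larger.
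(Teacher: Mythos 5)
Your overall strategy---produce a dual certificate $l \in \Sigma_{4,6}^*$ with $l(f)=0$ and $\ker Q_l = W := \spn{p_1,\dots,p_4}$, then argue that the Gram matrix is pinned down once its column space is forced into $W$---is the same as the paper's. Where you diverge usefully is in Step 2: your Koszul argument (that for a length-$4$ regular sequence of cubics, every cubic syzygy with coefficients in $W$ is a scalar combination of the Koszul relations, so the symmetric multiplication $S^2W \to H_{4,6}$ is injective and hence $A = I$) gives a self-contained, direct proof that there is a unique Gram matrix supported on $W$. The paper instead gets this last step by appealing to Blekherman's Corollaries 1.3/1.4 (to rule out fewer than $4$ squares) together with Capco--Scheiderer's Lemma 4.3 (to force uniqueness of a rank-$4$ representation). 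Your route replaces those black boxes with an elementary syzygy computation; the paper's route is shorter once the cited lemmas are accepted. Both are valid.

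Where your proposal is weaker is Step 3, which you correctly flag as the crux but leave as a sketch. The subtlety is exactly the one you name: any dual optimal $l$ automatically has $\ker Q_l \supseteq W$, but you must manufacture one with $\ker Q_l$ \emph{exactly} $W$, since otherwise the Gram matrix could a priori be supported on a larger subspace $V \supsetneq W$ where your injectivity argument in Step 2 no longer applies. Neither of your two suggestions matches what the paper actually does. The paper's construction is worth noting: it augments $\{p_1,\dots,p_4\}$ with each of $x^3, y^3, z^3, w^3$ in turn, solves the linear conditions $l_i(p\,g)=0$ to obtain (up to scaling) a unique $l_i$ whose $Q_{l_i}$ is PSD with $5$-dimensional kernel $W + \langle x_i^3\rangle$, and then sets $l = l_1+l_2+l_3+l_4$. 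Since a sum of PSD forms has kernel equal to the intersection of the individual kernels, $\ker Q_l = \bigcap_i (W + \langle x_i^3\rangle) = W$. This is a more constructive and directly verifiable recipe than ``restrict to $S_4$-invariant $l$ and solve a small feasibility problem'' or ``rationalize a numerical SDP solution'', and it sidesteps the exactness-of-kernel issue cleanly. As written, your Step 3 is a plan rather than a proof; filling it in essentially requires doing what the paper does.
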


\begin{proofels} See \cite[Worksheet C]{strictlyPositive} for the computations.
If a linear functional $\ell \in \Sigma_{4,6}^*$ vanishes in $f$ then, since $f = p_1^2 + \dots + p_4^2$, $\ell(p_i^2) = 0$ for $1 \le i \le 4$  and hence $p_1, \dots, p_4$ are in the kernel $W_{\ell}$ of the associated quadratic form $Q_{\ell}: H_{4,3} \rightarrow \R$, $Q_{\ell}(p) = \ell(p^2)$. To prove that any polynomial in a SOS decomposition of $f$ is in the span of $\{p_1, \dots, p_4\}$, we look for $\ell \in \Sigma_{4,6}^*$ such that $W_{\ell}$ is generated by $\{p_1, p_2, p_3, p_4\}$.

By \cite[Lemma 2.6]{blekherman}, $\ell$ must satisfy $\ell(p_i q) = 0$ for all $q \in H_{4,3}$ and all $p_i$, $1 \le i \le 4$.
Note that $\ell \in \Sigma^*_{4,6} \subset H^*_{4,6}$, which is an 84-dimensional space. The dimension of $H_{4,3}$ is 20, so for each $p_i$ we get 20 restrictions for $\ell$. This would be a total number of $80$ restrictions, but noting that $p_i p_j = p_j p_i$, we are imposing at most $20 + 19 + 18 + 17 = 74$ restrictions. Doing the computations in Maple we obtain that the space of linear functionals satisfying all the above conditions is indeed a $84-74 = 10$-dimensional space.

We need to find a functional $\ell$ in this $10$-dimensional space such that the associated quadratic form $Q_{\ell}$ is positive semidefinite. This is not easy to obtain by trial and error, so we try instead to find $Q_{\ell}$ as a sum of positive semidefinite forms. To do this, we consider first $p_5 = x^3$, and repeat the above procedure to compute a linear functional $\ell_1$ such that the kernel of $Q_{\ell_1}$ contains $\{p_1, \dots, p_5 \}$. We now have at most $20+19+18+17+16 = 90$ restrictions, but it turns out that there are other dependencies in the restrictions, and there is a unique $\ell_1$ (up to a scalar multiple) satisfying all the conditions. Moreover choosing the correct sign of the scalar multiple, the form $Q_{\ell_1}$ is positive semidefinite. In an analogous way, replacing $x^3$ by $y^3$, $z^3$ and $w^3$, we obtain linear functionals $\ell_2$, $\ell_3$, $\ell_4$ such that the associated quadratic forms are positive semidefinite with kernels generated by $\{p_1, \dots, p_4 \}$ and the respective new polynomial.

The last step is to consider $\ell = \ell_1 + \ell_2 + \ell_3 + \ell_4$. We know that for such $\ell$, the quadratic form $Q_{\ell}$ will be positive semidefinite (because it is the sum of positive semidefinite forms) with kernel equal to the intersection of the kernels of each $Q_{\ell_i}$. The kernel of this form $\ell$ turns out to be $W_{\ell}$, hence it satisfies the desired properties.

To conclude the proof, note that if $f = q_1^2 + \dots + q_s^2$ is another SOS decomposition of $f$, then $q_i \in W_{\ell}$, for all $1 \le i \le s$. Since $\dim(W_{\ell}) = 4$, which coincides with  the number of variables in the ring, following the proof of \cite[Corollaries 1.3 and 1.4]{blekherman} we obtain that $f$ cannot be written as the sum of less than 4 squares. And then, by \cite[Lemma 4.3]{capco}, $f$ has a unique representation as a sum of 4 squares (up to orthogonal equivalence).
\end{proofels}

We remark that the form $\ell$ in the proof of Proposition \ref{prop:unique46} is not an extreme ray in the normal cone of $f$, $N_f = \{\alpha \in \Sigma_{4,6}^*: \alpha(f) = 0\}$, since it is the sum of the forms $\ell_1$, $\ell_2$, $\ell_3$ and $\ell_4$; but the latter ones are extreme rays in that cone. Indeed, if any of these forms is a convex combination of other forms, then they all must have the same kernel and hence they are all a scalar multiple of each other, as in \cite[Lemma 2.2]{blekherman}.

This is different from the cases $(n,2d)=(4,4)$ and $(3,6)$, for which C. Scheiderer and J. Capco show in \cite[Corollary 3.7]{capco} that for every strictly positive polynomial $f$ in the boundary of the SOS cone, the normal cone $N_f$ consists of a single ray.

%By perturbing the polynomials in Example \ref{ex:reznick}, we can easily produce more examples in $\partial \Sigma_{4,6}$ with unique decomposition, and also examples with different ranks.

%\begin{example}[sum of 4 squares with unique decomposition]
%\label{ex1:46}
% Doing the following small perturbation of Example \ref{ex:reznick} the uniqueness is preserved. Set $f = p_1^2 + p_2^2 + p_3^2 + p_4^2$, with
%\begin{align*}
%p_1 &= x((2-1/2)x^2-(y^2+z^2+w^2)),\\
%p_2 &= y((2-1/2)y^2-(x^2+z^2+w^2)),\\
%p_3 &= z((2-1/2)z^2-(y^2+w^2)),\\
%p_4 &= w(w^2-(x^2+z^2)).
%\end{align*}
%
%The uniqueness of the decomposition can be verified using the same techniques as in Proposition \ref{prop:unique46} or numerically using SEDUMI (see \cite[Worksheet B]{strictlyPositive}).
%\end{example}

The above example is a sum of squares of a sequence of parameters. We now give an example that is a sum of squares of 5 polynomials, and hence the polynomials are not a sequence of parameters.

\begin{example}[sum of 5 squares with unique decomposition]
\label{ex:46sumOf5}
 Let $p_1, p_2, p_3, p_4$ be as in Example \ref{ex:reznick}, and let $p_5 = yzw$. For $f = p_1^2 + p_2^2 + p_3^2 + p_4^2 + p_5^2$ we can verify using SEDUMI or constructing an appropriate quadratic form that the given decomposition is the unique decomposition of $f$ as sum of squares of polynomials (see \cite[Worksheet C]{strictlyPositive} for the construction of the quadratic form as in Proposition \ref{prop:unique46}).
\end{example}

\subsubsection{\texorpdfstring{The algebraic boundary of $\Sigma_{4,6}$.}{The algebraic boundary of Sigma(4,6)}}
\label{section:algBound46}

One motivation for studying these examples is finding equations for the union $S$ of the non-discriminant components of the algebraic boundary of $\Sigma_{4,6}$. We recall that $\Sigma_{4,6}$ is a full-dimensional cone in $H_{4,6}$ and hence the dimension of $\Sigma_{4,6}$ is $\binom{9}{3} = 84$ and the dimension of $\partial\Sigma_{4,6}$ is 83.

Let $V = \{p_1^2 + \dots + p_5^2: p_i \in H_{4,3}\}$ be the variety of polynomials in $H_{4,6}$ that can be decomposed as a sum of 5 squares. The dimension of $H_{4,3}$ is 20 and the dimension of the orthogonal group $O(5)$ is $\binom{5}{2} = 10$. So, by orthogonal equivalence, the total dimension of $V$ is at most $20 \cdot 5 - 10 = 90$. Hence, by comparing only the dimensions, it is in principle possible that every polynomial in $\partial\Sigma_{4,6}$ is the sum of 5 squares. However, we show now an example of a polynomial in that variety that is the sum of 6 squares and cannot be decomposed as the sum of 5 squares.

\begin{example}[sum of 6 squares with unique decomposition]
\label{ex:46sumOf6}
Let $p_1, p_2, p_3, p_4$ be as in Example \ref{ex:reznick}, and let
$$p_5 = xyz + xzw \quad \text{and} \quad p_6 = x^2y + xy^2.$$
For $g = p_1^2 + p_2^2 + p_3^2 + p_4^2 + p_5^2 + p_6^2$ we verify using SEDUMI that the decomposition is unique (see \cite[Worksheet C]{strictlyPositive}). For this, we maximize different linear function over the Gram spectrahedron of $g$ and verify that it consists of only one point. We will show in Example \ref{ex:54sumOf7} a strategy that could also be applied here to give a rigourous proof that the decomposition is unique. However, the computations in this case are more complicated and we do not pursue this strategy.
\end{example}

\subsection{Polynomials in \texorpdfstring{$\Sigma_{5,4}$}{Sigma(5,4)} with unique SOS decompositions}
\label{subsection:54}

We provide examples of strictly positive polynomials in the boundary of $\Sigma_{5,4}$ for which the decomposition is unique.

\begin{example}[sum of 5 squares with unique decomposition]
\label{ex1:54}
We start with an example in the boundary of $\Sigma_{4,4}$ (which can be obtained following the construction in \cite{blekherman}) and add a fifth polynomial to it. Let $f = p_1^2+p_2^2+p_3^2+p_4^2+p_5^2$, with
\begin{align*}
p_1 &= x_1^2- x_4^2, \\
p_2 &= x_2^2- x_4^2, \\
p_3 &= x_3^2- x_4^2, \\
p_4 &= -x_1^2 - x_1 x_2 - x_1 x_3 + x_1 x_4 - x_2 x_3 + x_2 x_4 + x_3 x_4, \\
p_5 &= x_5^2.
\end{align*}

It can be easily seen numerically using SEDUMI that the polynomial $f$ is a strictly positive polynomial in the boundary of $\Sigma_{5,4}$ and that the Gram spectrahedron of $f$ consists of a unique matrix of rank 5. Thus the decomposition is unique modulo orthogonal transformations.
See \cite[Worksheet D]{strictlyPositive}.
\end{example}

\begin{example}[sum of 6 squares with unique decomposition]
\label{ex3:54}
We add a new polynomial $p_6$ to the latter example. Let $f = p_1^2+p_2^2+p_3^2+p_4^2 + p_5^2 + p_6^2$, with
\begin{align*}
p_1 &= x_1^2- x_4^2, \\
p_2 &= x_2^2- x_4^2, \\
p_3 &= x_3^2- x_4^2, \\
p_4 &= -x_1^2 - x_1 x_2 - x_1 x_3 + x_1 x_4 - x_2 x_3 + x_2 x_4 + x_3 x_4, \\
p_5 &= x_5^2, \\
p_6 &= x_1 x_5 + x_4 x_5.
\end{align*}

Using SEDUMI, it can be easily verified numerically that the polynomial $f$ is a strictly positive polynomial in the boundary of $\Sigma_{5,4}$ and that the Gram spectrahedron of $f$ consists of a unique matrix of rank $6$  (see \cite[Worksheet D]{strictlyPositive}).
\end{example}

\subsubsection{\texorpdfstring{The algebraic boundary of $\Sigma_{5,4}$.}{The algebraic boundary of Sigma(5,4)}}

As in the case $\Sigma_{4,6}$, we are interested in finding equations for the union $S$ of the non-discriminant components of the algebraic boundary of $\Sigma_{5,4}$. The cone $\Sigma_{5,4}$ is a full-dimensional cone in $H_{5,4}$ and hence the dimension of $\Sigma_{5,4}$ is $\binom{8}{4} = 70$ and the dimension of $\partial\Sigma_{5,4}$ is 69.

Let $V = \{p_1^2 + \dots + p_6^2: p_i \in H_{5,2}\}$ be the variety of polynomials in $H_{5,4}$ that can be decomposed as a sum of 6 squares. The dimension of $H_{5,2}$ is 15. So, as in Section \ref{section:algBound46}, the total dimension of $V$ is at most $15 \cdot 6 - \binom{6}{2} = 75$. Comparing the dimensions obtained, we see that it is in principle possible that every polynomial in $\partial\Sigma_{5,4}$ is a sum of 6 squares.
However, we show now an example of a polynomial in that variety that is the sum of 7 squares and cannot be decomposed as the sum of 6 squares.

\begin{example}
\label{ex:54sumOf7}
Let $p_1, p_2, p_3, p_4$ be as in Example \ref{ex1:54} and set
$$p_5 = x_1x_5 + x_2x_5, \quad p_6 = x_3 x_5 + x_2 x_5 \quad \text{ and } \quad p_7 = x_5^2.$$

Let $g = p_1^2+p_2^2+p_3^2+p_4^2+p_5^2+p_6^2+p_7^2$. Then $g$ is a strictly positive polynomial in the boundary of $\Sigma_{5,4}$ and it cannot be decomposed as the sum of 6 squares.
\end{example}

In this case we are able to completely prove this statement, following here the strategy used in \cite{laplagne8}.

\begin{proposition}
Let $g = p_1^2+p_2^2+p_3^2+p_4^2+p_5^2+p_6^2+p_7^2$ from Example \ref{ex:54sumOf7}. Then $g$ is a strictly positive polynomial in the boundary of $\Sigma_{5,4}$ and it cannot be decomposed as the sum of 6 squares.
\end{proposition}

\begin{proofels}
See \cite[Worksheet D]{strictlyPositive}).
The polynomial $g$ is strictly positive, since the only common root of $\{p_1, p_2, p_3, p_4\}$ in $\R^4$ is $(x_1, x_2, x_3, x_4) = (0,0,0,0)$ and this forces $x_5 = 0$ in $p_7$.

To prove that it cannot be decomposed as the sum of 6 squares, we construct first a linear form $\ell \in \Sigma_{5,4}^*$ that vanishes in $g$. As in
 the proof of Proposition \ref{prop:unique46}, the form $\ell$ must satisfy $\ell(p_i q) = 0$ for any $q \in H_{5,2}$. In this case, the form $\ell$ is unique (up to linear scaling) and the quadratic form $Q_{\ell}$ has kernel
 $$W = \langle p_1, p_2, p_3, p_4, x_1x_5, x_2x_5, x_3x_5, x_4x_5, x_5^2\rangle.$$
 That is, every polynomial in a SOS decomposition of $g$ is a linear combination of these 9 polynomials. This proves that $g$ is in the boundary of $\Sigma_{5,4}$, because its Gram spectrahedron is not of full rank.

To prove that $g$ cannot be decomposed as the sum of 6 squares, we define 6 polynomials $q_1, \dots, q_6 \in W$ with generic coefficients. As in \cite{laplagne8} we can assume that the 6 polynomials are triangulated. That is, we assume
{\small
$$\arraycolsep=.4pt
\begin{array}{r*{18}{l}}
q_1 &= a_{11} p_1 &+ &a_{21} p_2 &+ &a_{31} p_3 &+ &a_{41} p_4 &+ &a_{51} x_1x_5 &+ &a_{61} x_2x_5 &+ &a_{71} x_3x_5 &+ &a_{81} x_4x_5 &+ &a_{91} x_5^2 \\
q_2 &=            &  &a_{22} p_2 &+ &a_{32} p_3 &+ &a_{42} p_4 &+ &a_{52} x_1x_5 &+ &a_{62} x_2x_5 &+ &a_{72} x_3x_5 &+ &a_{82} x_4x_5 &+ &a_{92} x_5^2, \\
q_3 &=            &  &           &  &a_{33} p_3 &+ &a_{43} p_4 &+ &a_{53} x_1x_5 &+ &a_{63} x_2x_5 &+ &a_{73} x_3x_5 &+ &a_{83} x_4x_5 &+ &a_{93} x_5^2, \\
q_4 &=            &  &           &  &           &  &a_{44} p_4 &+ &a_{54} x_1x_5 &+ &a_{64} x_2x_5 &+ &a_{74} x_3x_5 &+ &a_{84} x_4x_5 &+ &a_{94} x_5^2, \\
q_5 &=            &  &           &  &           &  &           &  &a_{55} x_1x_5 &+ &a_{65} x_2x_5 &+ &a_{75} x_3x_5 &+ &a_{85} x_4x_5 &+ &a_{95} x_5^2, \\
q_6 &=            &  &           &  &           &  &           &  &              &  &a_{66} x_2x_5 &+ &a_{76} x_3x_5 &+ &a_{86} x_4x_5 &+ &a_{96} x_5^2.
\end{array}
$$
}

The equation $q_1^2 + \dots + q_6^2 = g$ defines a system of quadratic equations in the coefficients $\{a_{ij}\}_{1 \le i \le 9, 1 \le j \le 6, j \le i}$. To solve this system, we compute in Singular a Groebner basis of the ideal defined by the equations using the degree reverse lexicographical ordering. From the equations obtained we derive the following restrictions:
$$
\begin{aligned}
0 &= a_{96} = a_{95} \\
&= a_{84}= a_{74} = a_{64} = a_{54} \\
&= a_{83} = a_{73} = a_{63} = a_{53} = a_{43}\\
&= a_{82} = a_{72} = a_{62} = a_{52} = a_{42} = a_{32} \\
& = a_{81} = a_{71} = a_{61} = a_{51} = a_{41} = a_{31} = a_{21} \\
\end{aligned}
$$
$$
a_{75} = a_{85}
$$
$$
a_{11}^2 = a_{22}^2 = a_{33}^2 = a_{44}^2 = 1
$$

Since we can assume that the initial coefficient in each polynomial is positive, we simplify the last four restrictions to $a_{11} = a_{22} = a_{33} = a_{44} = 1$.

After applying these subsitutions in the original polynomials, we obtain the following polynomials:
$$\arraycolsep=.4pt
\begin{array}{r*{4}{l}}
q_1 &=        p_1 &+ &a_{91} x_5^2 \\
q_2 &=        p_2 &+ &a_{92} x_5^2, \\
q_3 &=        p_3 &+ &a_{93} x_5^2, \\
q_4 &=        p_4 &+ &a_{94} x_5^2,
\end{array}
$$
$$\arraycolsep=.4pt
\begin{array}{r*{8}{l}}
q_5 &=  a_{55} x_1x_5 &+ &a_{65} x_2x_5 &+ &a_{75} x_3x_5 &+ &a_{75} x_4x_5, \\
q_6 &=                &  &a_{66} x_2x_5 &+ &a_{76} x_3x_5 &+ &a_{86} x_4x_5.
\end{array}
$$

The equation $q_1^2 + \dots + q_6^2 = g$ defines a new system of quadratic equations in the coefficients. To solve this system, we apply the factorizing Groebner basis algorithm (with lexicographical ordering) to the ideal $I$ generated by the equations. This gives a set of 3 ideals $I_1, I_2, I_3$ whose intersection has the same zero--set as the input ideal (that is, $\sqrt{I} = \sqrt{I_1 \cap I_2 \cap I_3}$). We find that the following polynomials belong to these ideals:
{\small
$$
\begin{aligned}
I_1 &\ni 49a_{94}^6-28a_{94}^5+4a_{94}^3+10a_{94}^2+2, \\
I_2 &\ni 105a_{94}^8+232a_{94}^7+28a_{94}^6-312a_{94}^5-232a_{94}^4+104a_{94}^3+204a_{94}^2+96a_{94}+16, \\
I_3 &\ni 16a_{93}^6+8a_{93}^5-12a_{93}^4+4a_{93}^3+4a_{93}^2-4a_{93}+1.
\end{aligned}
$$
}

We check in Maple that these polynomials do not have any real roots. We conclude that $g$ cannot be decomposed as the sum of 6 squares of polynomials with coefficients in $\R$.
\end{proofels}

\section{Polynomials with few summands in a SOS decomposition}

In Section \ref{section:sharpbounds} we studied the maximum number of polynomials that can appear in a SOS decomposition for a strictly positive polynomial in the boundary of the SOS cone. We study now the minimal number of polynomials that can appear in such decompositions. For fixed $n$ and $d$, we are interested in finding the minimal number $s$ such that there exists a strictly positive polynomial in the boundary of $\Sigma_{n,2d}$ that can be decomposed as the sum of squares of $s$ polynomials.

%For the cases $(n, 2d)= (3,6)$ and $(n, 2d)= (4,6)$, any polynomial in the border is the sum of exactly $n$ polynomials, which forms a sequence of parameters. It is natural to think that in the general case every polynomial in the border is the sum of at least $n$ polynomials, and the polynomials in the decomposition will contain a sequence of parameters, that is, they will have no common zeros real or complex. However, we provide examples where the decomposition contains less than $n$ polynomials, and such that these polynomials contain common complex roots.

 Theorems \ref{blek36} and \ref{blek44} state that any strictly positive polynomial in the boundary of $\Sigma_{n,2d}$, where $(n,2d) = (3,6)$ and $(4,4)$, is the sum of exactly $n$ squares of polynomials, and the polynomials in the decomposition are a sequence of parameters. In Section \ref{section:sharpbounds} we constructed examples containing a sequence of parameters. All these examples suggest that every sum of squares in the boundary is the sum of squares of at least $n$ polynomials. Moreover, in \cite[Corollary 2.3]{blekherman}, G. Blekherman proves that if a form $\ell$ spans an extreme ray of $\Sigma^*_{n,2d}$ spans an extreme ray, then the forms in the kernel of $Q_{\ell}$ have no common projective zeroes, real or complex, and hence contain a sequence of parameters.

Contrary to what can be expected by the examples and results above, in this section we give an example of a strictly positive polynomial $f\in\partial\Sigma_{6,4}$ that admits a SOS decomposition with less than $n = 6$ polynomials. Moreover, the polynomials appearing in the decomposition  have a common complex root.

Before giving the example, we make a useful observation.

\begin{lemma}
If $f = p_1^2 + \dots + p_r^2 \in H_{n,2d}$ is a strictly positive polynomial in the boundary of $\Sigma_{n,2d}$, and $g_1, \dots, g_s$ are homogeneous polynomials of degree $d$ in new variables $x_{n+1}, \dots, x_{n+m}$, then $h = f + g_1^2 + \dots + g_s^2$ is a strictly positive polynomial in the boundary of $\Sigma_{n+m, 2d}$.
\end{lemma}
\begin{proofels}
Let $\ell \in \Sigma_{n,d}^*$ be a linear form that vanishes in $f$, and let $Q: H_{n,d} \rightarrow \R$ be the associated quadratic form. Then
$$
\tilde Q: H_{n+m, d} \rightarrow \R, \
\tilde Q(p) = Q(p(x_1, \dots, x_n, 0, \dots, 0))
$$
is a non-trivial positive semidefinite quadratic form that vanishes in
$$\{p_1, \dots, p_r, g_1, \dots, g_s\}.$$
Hence the polynomials in any decomposition of $h$ are in the kernel of $\tilde Q$ and $h$ is not in the interior of $\Sigma_{n+m,2d}$.
\end{proofels}

We can now present  the example we were looking for.

\begin{example}[sum of 5 squares in 6 variables with common complex roots]
\label{ex:64sumOf5}
Let $p_1$, $p_2$, $p_3$, $p_4$ be as in Example \ref{ex1:54} and let $p_5 = x_5^2+x_6^2$ (instead of $p_5 = x_5^2$ as in that example). Then $f = p_1^2+p_2^2+p_3^2+p_4^2+p_5^2$ is a strictly positive polynomial, which by the above remark is in the boundary of $\Sigma_{6,4}$. The common roots of
$\{p_1, \dots, p_5\}$ are
$$\{x_1=0, x_2=0, x_3=0, x_4=0, x_5 = I x_6\},$$
where $I$ is a root of $Z^2+1$. Moreover, it can easily be verified using SEDUMI that the given decomposition is the unique sum of squares decomposition of $h$ (up to orthogonal equivalence).
\end{example}

We can easily extend this example to more variables.

\begin{example}[sum of 5 squares in $n \ge 6$ variables with common complex roots]
\label{ex:6nsumOf5}
Let $p_1$, $p_2$, $p_3$, $p_4$ be as in Example  \ref{ex1:54} and let $p_5 = x_5^2+x_6^2 + \dots + x_n^2$, $n \ge 6$. Then $f = p_1^2+p_2^2+p_3^2+p_4^2+p_5^2$ is a strictly positive polynomial in the boundary of $\Sigma_{n,4}$ and the common roots of
$\{p_1, \dots, p_5\}$ satisfy
$$\{x_1=0, x_2=0, x_3=0, x_4=0, x_5^2 + x_6^2 + \dots + x_n^2 = 0\}.$$
\end{example}

\section*{Acknowledgments}
The authors would like to thank Jose Capco, Gabriela Jeronimo, Mart\'in Mereb, Daniel Perrucci and Claus Scheiderer for fruitful discussions. Santiago Laplagne would like to thank Claus Scheiderer and all the crew of Konstanz University for their kind hospitality during his visit to Konstanz.

\bibliographystyle{amsplain}
\bibliography{rationalSOS}

\end{document}